\documentclass[12pt]{article}
\usepackage{amssymb}
\usepackage{amsmath}
\usepackage{amsfonts}

\setcounter{MaxMatrixCols}{10}

\newtheorem{theorem}{Theorem}

\newtheorem{definition}[theorem]{Definition}
\newtheorem{example}[theorem]{Example}

\newtheorem{lemma}[theorem]{Lemma}

\newtheorem{proposition}[theorem]{Proposition}

\newenvironment{proof}[1][Proof]{\noindent\textbf{#1.} }{\ \rule{0.5em}{0.5em}}
\begin{document}

\title{Minimal driver sets on path and cycle graphs\\
with arbitrary non-zero weights}
\author{Johannes G. Maks \\
Delft Institute of Applied Mathematics \\
Delft Unviversity of Technology\\
Mekelweg 4, 2628 CD Delft, The Netherlands\\
Email: j.g.maks@tudelft.nl}
\date{}
\maketitle

\begin{abstract}
Let $G$ be a simple, undirected graph on the vertex set $V=\{1,2,\ldots ,n\}$
and let $A$ be the adjacency matrix of $G.$ A non-empty subset $%
\{i_{1},i_{2},\ldots ,i_{k}\}$ of $V$ is called a driver set for $G$ if the
system $\mathbf{\dot{x}}=A\mathbf{x}+u_{1}\mathbf{e}_{i_{1}}+\cdots +u_{k}%
\mathbf{e}_{i_{k}}$ is controllable.

In this paper we classify the minimal driver sets for the path and cycle
graphs $P_{n}$ and $C_{n}$ for all values of $n$ and we determine which of
those minimal driver sets render the system to be strongly structural
controllable with respect to the family of all symmetric matrices $X$
satisfying $x_{ij}=0\Leftrightarrow a_{ij}=0.$

Note that this new type of strong structural controllability requires all
diagonal elements of the system matrix to be equal to zero so for example
the Laplacian matrix is not included in the family. $\bigskip $

\noindent \textit{Keywords: }System, graph, (structural) controllability,
driver set.\bigskip

\noindent \textit{MSC: }05C50, 05C69, 93B05, 93B25
\end{abstract}

\section{Introduction}

Let $G=(V,E)$ be a simple, undirected graph on the vertex set $%
V=\{1,2,\ldots ,n\}$ with adjacency matrix $A.$ For each non-empty subset $%
S=\{i_{1},i_{2},\ldots ,i_{k}\}$ of $V$ let $B_{S}$ be the $(n\times k)$%
-matrix with columns $\mathbf{e}_{i_{1}},$ $\mathbf{e}_{i_{2}},\ldots ,%
\mathbf{e}_{i_{k}}$. A non-empty subset $S$ is called a driver set for $G$
if the system $\mathbf{\dot{x}}=A\mathbf{x}+B_{S}\mathbf{u}$, or the pair $%
(A,B_{S}),$ is controllable.

Let Sym$(G)$ be the set of symmetric $(n\times n)$-matrices $X$ with free
diagonal elements and off-diagonal elements $x_{ij}$ unequal to zero if and
only if $(i,j)\in E.$ If $(A,B_{S})$ is controllable then $(X,B_{S})$ is
controllable not just for $X=A$ but for almost all $X\in $ Sym$(G)$, a
property which is referred to as \textit{structural controllability} in the
literature. The subject of structural controllability of networks has been
studied intensively during the last two decades by many researchers in the
systems and control community, in view of applications where the weights of
the edges are not fixed due to lack of information or numerical instability.

A stronger version of structural controllability is the property that $%
(X,B_{S})$ is controllable for all $X\in $ Sym$(G).$ This property is
referred to as \textit{strong structural controllability} in the literature.

Perhaps surprisingly, it turned out that this notion of strong structural
controllability of a network is connected to the notion of a zero forcing
set of the underlying graph.\footnote{%
The notion of a zero forcing set (briefly summarized in section 4) had been
introduced several years earlier in a different context \cite{AIM}.} It has
been proved in \cite{Monshizadeh 2014} that $(X,B_{S})$ is controllable for
all $X\in $ Sym$(G)$ if and only if $S$ is a zero forcing set of $G.$

A zero forcing set is a special type of driver set but in general not every
driver set is a zero forcing set. The discovery of the connection between
strong structural controllability and zero forcing sets has understandably
caused a surge of research in the latter. We believe there are several good
reasons for studying all minimal driver sets for the system $(A,B_{S}),$
such as the following:

\begin{itemize}
\item The minimal size of a driver set could be smaller than the minimal
size of a zero forcing set (see example \ref{Q3} in section 3), which could
be relevant in applications where using a driver set of minimum cardinality
is essential.

\item Additional requirements about the relative positions of the vertices
in a driver set may exist which might not be satisfied by the zero forcing
sets.

\item Strong structural controllability with respect to Sym$(G)$ allows for $%
\left\vert V\right\vert +\left\vert E\right\vert $ degrees of freedom in the
system matrix. For simple graphs $G$ it seems more natural to study strong
structural controllability with respect to the smaller family Sym$_{0}(G)$
consisting of all matrices in Sym$(G)$ with zeros on the diagonal, allowing
for $\left\vert E\right\vert $ degrees of freedom only. Driver sets $S$ for
which $(X,B_{S})$ is controllable for all $X\in $ Sym$_{0}(G)$ are not
necessarily a zero forcing set.
\end{itemize}

In this paper we determine all minimal driver sets for the path and cycle
graphs for all values of $n,$ using a simple controllability test in terms
of the eigenspaces of the adjacency matrices. We also determine for which of
those minimal driver sets the system is strongly structural controllable
with respect to the family Sym$_{0}(G)$. It will turn out that not all such
sets are zero forcing sets, so we have discovered new types of minimal
driver sets that render the systems to be controllable for all non-zero
weights on the edges of the path and cycle graphs. These (non-trivial)
results for the path and cycle graphs could provide ideas for a similar
classification of minimal driver sets for other types of simple graphs.

The organization of the paper after the introduction is as follows. In
section 2 we present some relevant background information and notations. In
section 3 we derive a necessary and sufficient condition for controllability
of a system on a graph in terms of the eigenspaces of the adjacency matrix
of the graph and give three illustrative examples. In section 4 we introduce
a new type of strong structural controllability which we believe to be
natural for systems on simple graphs. In sections 5 and 6 we present our
results about the minimal driver sets for the path and cycle graphs.

\section{Preliminaries and notations}

\subsection{Controllability of linear systems}

Let $A$ and $B$ be matrices of sizes $(n\times n)$ and $(n\times k),$
respectively. A system $\mathbf{\dot{x}}=A\mathbf{x}+B\mathbf{u,}$ or the
pair $(A,B),\,$is controllable if any initial state vector can be steered by
the system to any other state vector in finite time. There are several
equivalent ways to state the Popov-Belevich-Hautus (PBH) controllability
test. Each of the following four properties is a necessary and sufficient
condition for $(A,B)$ to be controllable:%
\begin{equation*}
\begin{array}{l}
1.\text{ rank }\left[
\begin{array}{cc}
A-\lambda I & B%
\end{array}%
\right] =n\text{ for all }\lambda \in \mathbb{C} \\
2.\text{ rank }\left[
\begin{array}{cc}
A-\lambda I & B%
\end{array}%
\right] =n\text{ for all eigenvalues }\lambda \text{ of }A \\
3.\text{ no eigenvector }\mathbf{v}\text{ of }A^{T}\text{ exists with }B^{T}%
\mathbf{v}=0 \\
4.\text{ Nul }B^{T}\cap E_{\lambda }=\left\{ \mathbf{0}\right\} \text{ for
each eigenspace }E_{\lambda }\text{ of }A^{T}%
\end{array}%
\end{equation*}%
We shall refer to these statements as PBH 1, \ldots , PBH 4 in the sequel of
this paper. The first three condtions are well-known and used very often in
the literature. We add PBH 4 to the list because it will turn out to be
useful in this paper (see section 3).

PBH 2 implies that rank $B\geq $ gm $(\lambda )$ for each eigenvalue $%
\lambda $ of $A,$ hence%
\begin{equation}
\text{rank }B\geq \text{ }\underset{\lambda \in \sigma (A)}{\text{max}}\text{
}\left\{ \text{geometric multiplicity }\lambda \right\} .
\label{lower bound rank B}
\end{equation}

Two systems $(A,B)$ and $(A^{\prime },B^{\prime })$ are called equivalent if
there exists an invertible matrix $T$ such that%
\begin{equation*}
\left\{
\begin{array}{l}
A^{\prime }=TAT^{-1} \\
B^{\prime }=TB%
\end{array}%
\right.
\end{equation*}

If two systems are equivalent then controllability of the one is equivalent
to controllability of the other.

\subsection{Graphs}

An undirected graph $G=(V,E)$ consists of a set $V=\{1,2,\ldots ,n\}$ and a
set $E$ of unordered pairs $\{i,j\}$ of vertices. The elements of $V$ and $E$
are called the vertices and edges of $G,$ respectively. Two vertices $i$ and
$j$ are called adjacent if $\{i,j\}\in E.$ In this paper we only consider
graphs without loops, i.e., graphs without edges of the form $\{i,i\};$ such
graphs are called simple graphs. A path of length $k$ between two vertices $%
i $ and $j$ is a sequence of vertices $i_{1}=i,i_{2},\ldots ,i_{k-1},i_{k}=j$
such that $i_{t}$ and $i_{t+1}$ are adjacent for all $t\in \{1,2,\ldots
,k-1\}.$ The distance between two vertices $i$ and $j$ in a graph, denoted
by $d(i,j),$ is the shortest length of a path between $i$ and $j.$ The
adjacency matrix of a graph $G=(V,E)$ with $V=\{1,2,\ldots ,n\}$ is the
symmetric $(n\times n)$-matrix $A=[a_{ij}]$ with $a_{ij}=1$ if $\{i,j\}\in E$
and $a_{ij}=0$ otherwise.

An automorphism of a graph $G=(V,E)$ is a permutation $\sigma $ of $V$ which
satisfies the property $\{\sigma (i),\sigma (j)\}\in E$ if and only if $%
\{i,j\}\in E$. The set of all automorphisms of $G$ forms a group and is
denoted by $Aut(G).$ Every element of $Aut(G)$ can be represented uniquely
by a permutation matrix $P$ satisfying $A=P^{T}AP.$

In this paper we will pay special attention to two special graphs with
vertex set $V=\{1,2,\ldots ,n\},$ viz. the path graphs denoted by $P_{n\text{
}}$and the cycle graphs denoted by $C_{n}$. The edge sets are $%
\{\{1,2\},\{2,3\},\ldots ,\{n-1,n\}\}$ for $P_{n}$ and $\{\{1,2\},\{2,3\},%
\ldots ,\{n-1,n\},\{1,n\}\}$ for $C_{n}.$ The automorphism group $Aut(P_{n})$
is generated by the reflection $\sigma $ defined by $\sigma
(v_{i})=v_{n+1-i} $ for all $i\in \{1,2,\ldots ,n\},$ hence $Aut(P_{n})\cong
\mathbb{Z}_{2}.$ The automorphism group of $C_{n}$ is generated by the
rotation $\sigma =(1,2,\ldots ,n)$ and the reflection $\tau $ about the axis
that passes through the vertex $1$ and the centre of $C_{n}$, hence $%
Aut(C_{n})\cong D_{2n},$ the dihedral group of order $2n$ (the group of
symmetries of a regular $n$-gon$).$

\subsection{Pl\"{u}cker coordinates of subspaces}

Let $W$ be an $m$-dimensional subspace of $\mathbb{R}^{n}$ and $\left\{
\mathbf{w}_{1},\mathbf{w}_{2},\ldots ,\mathbf{w}_{m}\right\} $ a basis of $%
W. $ The $\binom{n}{m}$ maximal minors of the $(n\times m)$-matrix $\left[
\begin{array}{cccc}
\mathbf{w}_{1} & \mathbf{w}_{2} & \cdots & \mathbf{w}_{m}%
\end{array}%
\right] $ are the Pl\"{u}cker coordinates of the subspace $W.$ These
coordinates are homogeneous coordinates as they are determined up to a joint
non-zero factor: if we change the basis of $W$ all Pl\"{u}cker coordinates
are multiplied by the determinant of the $(m\times m)$-matrix that
represents the change of basis. The Pl\"{u}cker coordinates of $W$ are
indexed by the $\binom{n}{m}$ sets $\{i_{1},i_{2},\ldots ,i_{i_{m}}\}$ with $%
1\leq i_{1}\leq i_{2}\leq \cdots \leq i_{m}\leq n.$

\section{Minimal driver sets on graphs}

Let $G=(V,E)$ be a simple, undirected graph on the vertex set $%
V=\{1,2,\ldots ,n\}$ with adjacency matrix $A=A(G).$ For each non-empty
subset $S=\{i_{1},i_{2},\ldots ,i_{k}\}$ of $V$ let $B_{S}$ be the $(n\times
k)$-matrix with columns $\mathbf{e}_{i_{1}},$ $\mathbf{e}_{i_{2}},\ldots ,%
\mathbf{e}_{i_{k}}$. We start with some definitions.

\begin{definition}
A non-empty subset $S$ of $V$ is called a driver set for $G$ if the system $%
(A,B_{S})$ is controllable.
\end{definition}

\begin{definition}
$D(G)$ denotes the minimum cardinality of a driver set for the graph $G$.
\end{definition}

\begin{definition}
$N_{D}(G)$ denotes the number of minimal driver sets for $G.$
\end{definition}

\begin{definition}
$M(G)$ denotes the maximum of all geometric multiplicities of the adjacency
matrix $A(G).$
\end{definition}

Since rank $B_{S}=\left\vert S\right\vert $ inequality (\ref{lower bound
rank B}) yields%
\begin{equation}
D(G)\geq M(G).  \label{lower bound D(G)}
\end{equation}%
Application of PBH 4 to the pair $(A,B_{S})$ with $A=A(G)$ (which is a
symmetric matrix hence $A^{T}$ can be replaced by $A$) yields the statement
\begin{equation*}
S\text{ is a driver set for }G\Longleftrightarrow \text{Nul }B_{S}^{T}\cap
E_{\lambda }=\left\{ \mathbf{0}\right\}
\end{equation*}%
for each of the eigenspaces $E_{\lambda }$ of $A.$

Let $W$ be an $m$-dimensional subspace of $\mathbb{R}^{n}$ and $S$ a subset
of $V$ with $\left\vert S\right\vert =k\geq 1.$ Then Nul $B_{S}^{T}$ $\cap $
$W=\left\{ \mathbf{0}\right\} $ if and only if $\left\{ B_{S}^{T}\mathbf{w}%
_{1},B_{S}^{T}\mathbf{w}_{2},\ldots ,B_{S}^{T}\mathbf{w}_{m}\right\} $ is
linearly independent for each basis $\left\{ \mathbf{w}_{1},\mathbf{w}%
_{2},\ldots ,\mathbf{w}_{m}\right\} $ of $W$. This condition can be
rephrased as
\begin{equation*}
\text{rank }B_{S}^{T}M=m
\end{equation*}%
for each $(n\times m)$-matrix $M$ which satisfies Col $M=W$ (i.e., the
columns of $M$ form a basis of $W$). Note that $B_{S}^{T}M$ is a $(k\times
m) $-matrix and that the condition rank $B_{S}^{T}M=m$ implies $k\geq m,$
i.e.,%
\begin{equation*}
\left\vert S\right\vert \geq \dim W.
\end{equation*}

If $\left\vert S\right\vert =\dim W=k$ then $B_{S}^{T}M$ is a $(k\times k)$%
-matrix in which case the condition rank $B_{S}^{T}M=k$ is equivalent to the
condition det $B_{S}^{T}M\neq 0.$

The determinant of $B_{S}^{T}M$ is the homogeneous Pl\"{u}cker coordinate
indexed by $S$ of the subspace $W.$ Hence we have the following lemma, which
is a useful tool for constructing minimal driver sets for graphs $G$ with $%
D(G)=M(G).$

\begin{lemma}
\label{Plucker}Let $G$ be graph with $D(G)=M(G)=k.$ If $S$ is a minimal
driver set for $G$ then for each $k$-dimensional eigenspace $E_{\lambda }$
of $A(G)$ the Pl\"{u}cker coordinate of $E_{\lambda }$ indexed by $S$ is
unequal to zero.
\end{lemma}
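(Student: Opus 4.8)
The plan is to unwind the definitions that the excerpt has already carefully set up, so the lemma follows almost immediately from PBH~4 together with the Pl\"ucker-coordinate identification. First I would recall the hypothesis $D(G)=M(G)=k$ and let $S$ be a minimal driver set, so $\left\vert S\right\vert =D(G)=k$. Fix a $k$-dimensional eigenspace $E_{\lambda}$ of $A(G)$ (if no such eigenspace exists there is nothing to prove). Since $A=A(G)$ is symmetric we may apply PBH~4 with $A^{T}$ replaced by $A$: controllability of $(A,B_{S})$ is equivalent to $\mathrm{Nul}\,B_{S}^{T}\cap E_{\mu}=\{\mathbf{0}\}$ for \emph{every} eigenspace $E_{\mu}$ of $A$, and in particular for $E_{\lambda}$.

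Next I would invoke the chain of equivalences established in the paragraphs preceding the lemma. Choosing any $(n\times k)$-matrix $M$ whose columns form a basis of $E_{\lambda}$, the condition $\mathrm{Nul}\,B_{S}^{T}\cap E_{\lambda}=\{\mathbf{0}\}$ is equivalent to $\{B_{S}^{T}\mathbf{w}_{1},\ldots,B_{S}^{T}\mathbf{w}_{k}\}$ being linearly independent, i.e.\ to $\mathrm{rank}\,B_{S}^{T}M=k$. Because $\left\vert S\right\vert=k=\dim E_{\lambda}$, the matrix $B_{S}^{T}M$ is square of size $k\times k$, so $\mathrm{rank}\,B_{S}^{T}M=k$ is equivalent to $\det B_{S}^{T}M\neq 0$. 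Finally, as noted in the excerpt, $\det B_{S}^{T}M$ is precisely the homogeneous Pl\"ucker coordinate of $E_{\lambda}$ indexed by $S$ (the maximal minor of $M$ on the rows indexed by $S$); this is well defined up to a nonzero scalar, so the condition ``$\neq 0$'' is basis-independent. Stringing these equivalences together gives that the Pl\"ucker coordinate of $E_{\lambda}$ indexed by $S$ is nonzero, as claimed.

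There is essentially no hard step here: the lemma is a bookkeeping consequence of material already assembled in the excerpt, and the only things to be careful about are (i) using symmetry of $A$ to pass from $A^{T}$ to $A$ in PBH~4, (ii) observing that minimality of $S$ together with $D(G)=M(G)=k$ forces $\left\vert S\right\vert=k$ exactly, which is what makes $B_{S}^{T}M$ square so that the rank condition becomes a determinant condition, and (iii) recalling that $B_{S}^{T}M$ selects the rows of $M$ indexed by $S$, so its determinant is indeed the Pl\"ucker coordinate labelled $S$. If anything required extra care it would be the well-definedness remark in (iii) — that changing the basis of $E_{\lambda}$ only rescales all Pl\"ucker coordinates by a common nonzero factor — but this is already stated in the Pl\"ucker-coordinates subsection, so I would simply cite it.

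I would write the proof in three or four sentences, essentially: apply PBH~4 to $(A,B_{S})$ at the eigenvalue $\lambda$; rephrase $\mathrm{Nul}\,B_{S}^{T}\cap E_{\lambda}=\{\mathbf{0}\}$ as $\mathrm{rank}\,B_{S}^{T}M=k$ for a basis matrix $M$ of $E_{\lambda}$; note that $\left\vert S\right\vert=\dim E_{\lambda}=k$ makes this $\det B_{S}^{T}M\neq 0$; and identify $\det B_{S}^{T}M$ with the Pl\"ucker coordinate of $E_{\lambda}$ indexed by $S$.
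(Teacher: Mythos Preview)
Your proposal is correct and matches the paper's approach exactly: the lemma is stated in the paper without a separate proof environment, being presented as the direct consequence (``Hence we have the following lemma'') of the chain of observations immediately preceding it---PBH~4 applied with $A^{T}=A$, the rephrasing of $\mathrm{Nul}\,B_{S}^{T}\cap E_{\lambda}=\{\mathbf{0}\}$ as $\mathrm{rank}\,B_{S}^{T}M=m$, the passage to $\det B_{S}^{T}M\neq 0$ when $\left\vert S\right\vert=\dim E_{\lambda}$, and the identification of this determinant with the Pl\"ucker coordinate indexed by $S$. Your write-up simply makes these steps explicit.
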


\begin{example}
$G=P_{5}.$ The eigenvalues of $A$ are $-1,0,1,-\sqrt{3},\sqrt{3}.$ Basis
vectors for the corresponding eigenspaces are the columns of the matrix $M$
given by
\begin{equation*}
M=\left[
\begin{array}{rrrrr}
-1 & 1 & -1 & 1 & 1 \\
1 & 0 & -1 & -\sqrt{3} & \sqrt{3} \\
0 & -1 & 0 & 2 & 2 \\
-1 & 0 & 1 & -\sqrt{3} & \sqrt{3} \\
1 & 1 & 1 & 1 & 1%
\end{array}%
\right] .
\end{equation*}
\end{example}

$D(P_{5})=M(P_{5})=1$ and $N_{D}(P_{5})=2.$ The two minimal driver sets are $%
\{1\}$ and $\{5\}$ because rows 1 and 5 of the matrix $M$ do not contain a
zero. The two minimal driver sets lie in a single orbit under the action of
the automorphism group $Aut(P_{5})=\left\langle (1,5)(2,4)\right\rangle .$
The path graphs $P_{n}$ for general $n$ will be discussed in section 5.

\begin{example}
$G=C_{6}.$ The eigenvalues of $A$ are $-2,2,-1^{2},1^{2}$ hence $M(C_{6})=2,$
which implies $D(C_{6})\geq 2.$ Basis vectors for the corresponding
eigenspaces are collected in the following block matrix%
\begin{equation*}
M=\left[ M_{1}|M_{2}|M_{3}|M_{4}\right] =\left[
\begin{tabular}{r|r|rr|rr}
$-1$ & $1$ & $-1$ & $-1$ & $1$ & $-1$ \\
$1$ & $1$ & $0$ & $1$ & $0$ & $-1$ \\
$-1$ & $1$ & $1$ & $0$ & $-1$ & $0$ \\
$1$ & $1$ & $-1$ & $-1$ & $-1$ & $1$ \\
$-1$ & $1$ & $0$ & $1$ & $0$ & $1$ \\
$1$ & $1$ & $1$ & $0$ & $1$ & $0$%
\end{tabular}%
\right] .
\end{equation*}%
By looking at these bases of the eigenspaces we can immediately observe that
$D(C_{6})=2$. It turns out that the nonzero Pl\"{u}cker coordinates of \ $%
E_{-1}$ and $E_{1}$ are precisely the ones indexed by the 12 elements $%
\{i,j\}$ with $d(i,j)\in \{1,2\}.$ Since the basis vectors of the remaining
eigenspaces $E_{-2}$ and $E_{2}$ don't have two zeros in any of these pairs
of positions we can conclude $D(C_{6})=2$ with $N_{D}(C_{6})=12.$ The sets $%
\{i,j\}$ with $d(i,j)=3$ are the sets of cardinality 2 that are not a driver
set. For example $\{1,4\}$ is not a driver set because $\det
B_{\{1,4\}}^{T}M_{3}=0$ or $\det B_{\{1,4\}}^{T}M_{4}=0$ (in this example
both are true):%
\begin{equation*}
B_{\{1,4\}}^{T}M_{3}=\left[
\begin{array}{rr}
-1 & -1 \\
-1 & -1%
\end{array}%
\right] \text{ and }B_{\{1,4\}}^{T}M_{4}=\left[
\begin{array}{rr}
1 & -1 \\
-1 & 1%
\end{array}%
\right] .
\end{equation*}%
The minimal driver sets fall into the two orbits $\{\{i,j\}$ $|$ $d(i,j)=1\}$
and $\{\{i,j\}$ $|$ $d(i,j)=2\}$ under the group $Aut(C_{6})=\left\langle
(1,2,3,4,5,6),(1,2)(3,6)(4,5)\right\rangle .$ The cycle graphs $C_{n}$ for
general $n$ will be discussed in section 6.
\end{example}

Note that in the examples above the property of being a minimal driver set
is invariant under the action of the automorphism group $Aut(G).$ This is
true in general:

\begin{proposition}
Let $\pi \in Aut(G).$ Then $S$ is a driver set for $G$ if and only if $\pi
(S)$ is a driver set for $G.$
\end{proposition}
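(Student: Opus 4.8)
The plan is to use the automorphism characterization $A=P^{T}AP$ together with the equivalence of systems introduced in Section 2.1. First I would note that an automorphism $\pi\in Aut(G)$ is represented by a permutation matrix $P$ satisfying $A=P^{T}AP$, equivalently $PA=AP$ since $P^{T}=P^{-1}$. The key observation is that permuting the columns of $B_{S}$ (or, equivalently for the controllability question, permuting the rows of the state space) by $P$ sends $B_{S}$ to a matrix whose columns are the standard basis vectors indexed by $\pi(S)$, up to a reordering of columns that does not affect the column space or the rank conditions in the PBH test.

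The main steps, in order, would be: (1) Fix $\pi\in Aut(G)$ with permutation matrix $P$, so that $PA=AP$ and $P^{-1}=P^{T}$. (2) Observe that $PB_{S}$ has columns $P\mathbf{e}_{i_{1}},\ldots,P\mathbf{e}_{i_{k}}$, and that $P\mathbf{e}_{i}=\mathbf{e}_{\pi(i)}$ (or $\mathbf{e}_{\pi^{-1}(i)}$ depending on the convention for how $P$ encodes $\pi$); in either case the set of columns of $PB_{S}$ is exactly $\{\mathbf{e}_{j}: j\in \pi(S)\}$, possibly in a different order, so $PB_{S}=B_{\pi(S)}Q$ for some permutation matrix $Q$ acting on the $k$ columns. (3) Then the pair $(A,B_{S})$ is carried by the invertible matrix $P$ to the pair $(PAP^{-1},PB_{S})=(A,B_{\pi(S)}Q)$, which is the system $(A,B_{\pi(S)})$ after an invertible (permutation) reparametrization of the input, hence equivalent to it. (4) Invoke the statement from Section 2.1 that equivalent systems are simultaneously controllable or not, concluding that $(A,B_{S})$ is controllable iff $(A,B_{\pi(S)})$ is controllable, i.e. $S$ is a driver set iff $\pi(S)$ is.

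Alternatively, and perhaps more cleanly, I would run the argument through PBH 4: $S$ is a driver set iff $\mathrm{Nul}\,B_{S}^{T}\cap E_{\lambda}=\{\mathbf 0\}$ for every eigenspace of $A$. Since $PA=AP$, the map $\mathbf v\mapsto P\mathbf v$ permutes the eigenspaces of $A$ (each $E_{\lambda}$ is $P$-invariant, in fact, because $P$ commutes with $A$), and one checks $\mathbf v\in\mathrm{Nul}\,B_{S}^{T}$ iff $P\mathbf v\in\mathrm{Nul}\,B_{\pi(S)}^{T}$, so $P$ maps $\mathrm{Nul}\,B_{S}^{T}\cap E_{\lambda}$ isomorphically onto $\mathrm{Nul}\,B_{\pi(S)}^{T}\cap E_{\lambda}$; one of these is trivial iff the other is. Since $\pi$ ranges over a group, applying this to $\pi$ and to $\pi^{-1}$ gives the biconditional.

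The only real subtlety — and the thing I would be careful about — is the bookkeeping convention: whether the permutation matrix $P$ with $A=P^{T}AP$ sends $\mathbf{e}_{i}$ to $\mathbf{e}_{\pi(i)}$ or to $\mathbf{e}_{\pi^{-1}(i)}$, and correspondingly whether the conclusion is phrased with $\pi(S)$ or $\pi^{-1}(S)$. Since $Aut(G)$ is a group this distinction is harmless for the \emph{statement} (replace $\pi$ by $\pi^{-1}$), but it needs to be handled consistently within the proof. Everything else is a routine consequence of $P$ being invertible and commuting with $A$; there is no genuine obstacle, so the write-up should be short.
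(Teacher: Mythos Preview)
Your proposal is correct and follows essentially the same route as the paper: represent $\pi$ by a permutation matrix $P$ with $A=PAP^{T}$, observe that $PB_{S}$ has columns $\{\mathbf{e}_{j}:j\in\pi(S)\}$, and conclude that $(A,B_{S})$ and $(A,B_{\pi(S)})$ are equivalent systems. The paper writes this in two lines and simply asserts $B_{\pi(S)}=PB_{S}$ without your extra column-permutation matrix $Q$ or the alternative PBH~4 argument, but the substance is identical.
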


\begin{proof}
Let $P$ denote the permutation matrix that corresponds to $\pi \in Aut(G).$
Then $B_{\pi (S)}=PB_{S}$ and $A=PAP^{T},$ hence the systems $(A,B_{S})$ and
$(A,B_{\pi (S)})$ are equivalent.
\end{proof}

Suppose we know that $D(G)=k$ and we also know the different orbits of $k$%
-sets under \ the group $Aut(G).$ Then the set all of minimal driver sets
can be simply determined by investigating one representative of each orbit.
The following example illustrates this method.

\begin{example}
\label{Q3}Let $Q_{n},$ $n\geq 1,$ denote the hypercube graph with $2^{n}$
vertices, i.e., the graph with vertex set $V=\{0,1\}^{n}$ and the following
definition of adjacency: $x$ and $y$ are adjacent (form an edge) if and only
if $x$ and $y$ differ in one coordinate position only. The adjacency
matrices of $Q_{n}$ can be defined recursively as follows:%
\begin{equation*}
A(Q_{1})=\left[
\begin{array}{cc}
0 & 1 \\
1 & 0%
\end{array}%
\right] \text{ and }A(Q_{n+1})=\left[
\begin{array}{cc}
A(Q_{n}) & I_{n} \\
I_{n} & A(Q_{n})%
\end{array}%
\right] ,\text{ }n\geq 1.
\end{equation*}%
In this example we consider $Q_{3}.$ The eigenvalues of $A(Q_{3})$ are $%
3^{1},(-3)^{1},1^{3}$ and $(-1)^{3}$ hence $M(G)=3,$ which implies $%
D(Q_{3})\geq 3.$ Basis vectors for the corresponding eigenspaces are
collected in the following block matrix%
\begin{equation*}
M=\left[ M_{1}|M_{2}|M_{3}|M_{4}\right] =\left[
\begin{array}{rrrrrrrr}
1 & -1 & -1 & 0 & 0 & 1 & 0 & 0 \\
1 & 1 & 0 & -1 & 0 & 0 & 1 & 0 \\
1 & 1 & 0 & 0 & -1 & 0 & 0 & 1 \\
1 & -1 & 1 & -1 & -1 & -1 & -1 & -1 \\
1 & 1 & -1 & 1 & 1 & -1 & -1 & -1 \\
1 & -1 & 0 & 0 & 1 & 0 & 0 & 1 \\
1 & -1 & 0 & 1 & 0 & 0 & 1 & 0 \\
1 & 1 & 1 & 0 & 0 & 1 & 0 & 0%
\end{array}%
\right] .
\end{equation*}%
By looking at these bases of the eigenspaces we can immediately observe that
$D(Q_{3})=3:$ the maximal minors of $M_{3}$ and $M_{4}$ from the first three
rows (for example) are both unequal to zero while the first three elements
of $M_{1}$ and of $M_{2}$ are not equal to zero. The minimal size of a zero
forcing set for $Q_{3}$ is equal to $4$ hence none of the minimal driver
sets for $Q_{3}$ is a zero forcing set. Now let's look at the total picture
of minimal driver sets for $Q_{3}.$ There are three orbits of subsets of
vertices of cardinality 3 under the group $Aut(G)\cong S_{3}\times S_{2}^{3}$
(with representatives $\{1,2,3\},\{1,2,4\}$ and $\{1,2,7\}$). It is readily
seen that both $\{1,2,3\}$ and $\{1,2,4\}$ are a minimal driver set and $%
\{1,2,7\}$ is not. The orbits of $\{1,2,3\}$ and $\{1,2,4\}$ have sizes $24$
and $8,$ respectively, hence $N_{D}(Q_{3})=32.$
\end{example}

\section{Strong structural controllability}

Let $S$ be a driver set for a graph $G=(V,E)$ with $\left\vert V\right\vert
=n.$ Let Sym$(G)$ be the set of all symmetric $(n\times n)$-matrices $X=%
\left[ x_{ij}\right] $ satisfying
\begin{equation*}
x_{ij}\neq 0\text{ }\Leftrightarrow (i,j)\in E
\end{equation*}%
for all pairs $(i,j)$ with $i\neq j.$ Hence Sym$(G)$ is the largest set of
symmetric matrices that have their non-zero off-diagonal entries in
precisely the same positions as the adjacency matrix $A.$ The following type
of strong structural controllability is well-known:

\begin{definition}
$(G,S)$ is strongly Sym$(G)$-controllable if $(X,B_{S})$ is \newline
controllable for all $X\in $ Sym$(G).$
\end{definition}

Note that this formulation is a succinct alternative to the more elaborate
version `$(G,S)$ is strongly structurally controllable with respect to Sym$%
(G)$ if $(X,B_{S})$ is controllable for all $X\in $ Sym$(G)$'. More
generally we replace `strongly structurally controllable with respect to $%
F^{\prime }$ by `strongly $F$-controllable' (where $F$ is a set of matrices
having the same zero/non-zero pattern in the off-diagonal entries as $A$).

It has been proved in \cite{Monshizadeh 2014} that $(G,S)$ is strongly Sym$%
(G)$-controllable if and only if $S$ is a zero forcing set of $G.$

The process of zero forcing, which was introduced in \cite{AIM} and
independently in \cite{Burgarth 2007}, can be briefly summarized in the
following way.

Let $S$ be a non-empty subset of vertices of $G$ and suppose all vertices
from $S$ are colored black and all vertices from $V\backslash S$ are colored
white. If there exists a black vertex with exactly one white neighbour $j$
then change the color of $j$ to black and extend the set $S$ to $S\cup \{j\}$
and repeat this process until no color change is possible anymore.

\begin{definition}
The set $S$ is called a zero forcing set if the coloring process described
above results in all vertices being colored black.
\end{definition}

\begin{definition}
The zero forcing number of $G,$ denoted by $Z(G),$ is the \newline
minimum cardinality of a zero forcing set.
\end{definition}

The zero forcing number and minimal zero forcing sets for the path and cycle
graphs are well-known:%
\begin{equation*}
\begin{tabular}{|c|c|c|}
\hline
$G$ & $Z(G)$ & Zero forcing sets $S$ with $\left\vert S\right\vert =Z(G)$ \\
\hline
$P_{n}$ & $1$ & $\{1\}$ and $\{n\}$ \\ \hline
$C_{n}$ & $2$ & $\{i,j\}$ with $d(i,j)=1$ \\ \hline
\end{tabular}%
\end{equation*}

Each zero forcing set is a driver set hence for each graph $G$ we have%
\begin{equation}
D(G)\leq Z(G).  \label{upper bound D(G)}
\end{equation}

Note that for the path and cycle graphs all minimal zero forcing sets lie in
the same orbit under the action of the automorphism groups of the graphs. In
general the minimal zero forcing sets of $G$ could lie in different orbits
but the property of being a zero forcing set is indeed invariant under the
action of $Aut(G).$ This follows immediatly from the definition of a zero
forcing set, which is based on the adjacency structure of $G$ only.
Equivalently we have the following property:

\begin{proposition}
\label{Aut(G)-invariance}Let $\pi \in Aut(G).$ Then $(G,S)$ is strongly Sym$%
(G)$-controllable if and only if $(G,\pi (S))$ is strongly Sym$(G)$%
-controllable.
\end{proposition}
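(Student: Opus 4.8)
The plan is to mimic the short proof of the earlier Proposition about driver sets, using the characterization of strong Sym$(G)$-controllability by an equivalence of systems rather than a structural one. First I would fix $\pi \in Aut(G)$ and let $P$ be the permutation matrix representing $\pi$, so that $A = PAP^{T}$ and, as observed already, $B_{\pi(S)} = PB_{S}$. The key observation is that conjugation by $P$ induces a bijection of the family $\mathrm{Sym}(G)$ onto itself: if $X \in \mathrm{Sym}(G)$ then $PXP^{T}$ is again symmetric, has zero/non-zero off-diagonal pattern governed by $\{\pi(i),\pi(j)\} \in E \Leftrightarrow \{i,j\}\in E$ (which holds precisely because $\pi$ is an automorphism), and has free diagonal; conversely every $X' \in \mathrm{Sym}(G)$ is of this form with $X = P^{T}X'P$.

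Next I would carry out the reduction. Suppose $(G,S)$ is strongly $\mathrm{Sym}(G)$-controllable, i.e. $(X,B_{S})$ is controllable for every $X \in \mathrm{Sym}(G)$. Given an arbitrary $X' \in \mathrm{Sym}(G)$, write $X' = PXP^{T}$ with $X = P^{T}X'P \in \mathrm{Sym}(G)$. Then the pair $(X',B_{\pi(S)}) = (PXP^{T}, PB_{S})$ is equivalent to $(X,B_{S})$ via the invertible matrix $T = P$, so it is controllable by hypothesis. Since $X'$ was arbitrary, $(G,\pi(S))$ is strongly $\mathrm{Sym}(G)$-controllable. The converse follows by applying this argument to $\pi^{-1} \in Aut(G)$ and the set $\pi(S)$, noting $\pi^{-1}(\pi(S)) = S$.

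There is essentially no hard part here — the only thing to be careful about is the claim that $X \mapsto PXP^{T}$ preserves $\mathrm{Sym}(G)$, which is exactly where the automorphism property (as opposed to an arbitrary permutation) is used, and the fact that the diagonal being unconstrained is compatible with conjugation by a permutation matrix (permutation conjugation permutes the diagonal entries among themselves). I would state these two points explicitly and then invoke the standard fact, recalled in Section 2.1, that equivalent systems are simultaneously controllable or uncontrollable. Alternatively, one could note that this proposition is an immediate corollary of the cited result of \cite{Monshizadeh 2014} together with the already-stated $Aut(G)$-invariance of the zero forcing property; I would mention this as a one-line remark but give the self-contained equivalence argument as the primary proof, since it does not rely on the zero-forcing characterization.
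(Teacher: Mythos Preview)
Your proof is correct and follows essentially the same route as the paper's: introduce the permutation matrix $P$ for $\pi$, use $B_{\pi(S)}=PB_{S}$ together with the equivalence $(X,B_{S})\sim(PXP^{T},PB_{S})$, and note that $X\mapsto PXP^{T}$ is a bijection of $\mathrm{Sym}(G)$ onto itself. Your write-up is a bit more explicit (spelling out the converse via $\pi^{-1}$ and the alternative zero-forcing remark), but the underlying argument is identical.
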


\begin{proof}
Let $P$ denote the permutation matrix that corresponds to $\pi \in Aut(G).$
Then $B_{\pi (S)}=PB_{S}.$ The systems $(X,B_{S})$ and $(PXP^{T},PB_{S})$
are equivalent hence controllability of the one is equivalent to
controllability of the other. On the other hand, Sym$(G)$ is invariant under
the transformation $X\mapsto PXP^{T},$ which permutes the free parameters on
the diagonal and the free parameters on the off-diagonal positions $(i,j)\in
E.$
\end{proof}

Strong Sym$(G)$-controllability allows for $\left\vert V\right\vert
+\left\vert E\right\vert $ degrees of freedom in the system matrix. In
applications with simple graphs is seems more natural to require strong
structural controllability with respect to the smaller family Sym$_{0}(G)$
consisting of all matrices in Sym$(G)$ with zeros on the diagonal, allowing
for $\left\vert E\right\vert $ degrees of freedom only. Driver sets $S$ for
which $(G,S)$ is Sym$_{0}(G)$-controllable are not necessarily a zero
forcing set. Note that Proposition \ref{Aut(G)-invariance} holds for the
smaller family Sym$_{0}(G)$ as well, because the transformation $X\mapsto
PXP^{T}$ doesn't change the zeros on the diagonal.

The chain Sym$_{0}(G)\subset $ Sym$(G)$ gives rise to the following two
types of driver sets $S:$

\begin{definition}
A driver set $S$ is%
\begin{equation*}
\begin{tabular}{l}
of type I if $(G,S)$ is strongly Sym$(G)$-controllable \\
\\
of type II if $\left\{
\begin{tabular}{l}
$(G,S)$ is strongly Sym$_{0}(G)$-controllable, \\
but not strongly Sym$(G)$-controllable%
\end{tabular}%
\right. $%
\end{tabular}%
\end{equation*}
\end{definition}

Driver sets of type I are zero forcing sets, driver sets of type II are not
zero forcing sets but could still be useful for certain applications. Since
each of the two types defined above is $Aut(G)$-invariant we could also
speak of \textit{orbits} of type I, II$.$

To prove that $(G,S)$ is strongly $F$-controllable we can proceed as
follows. Due to PBH 1 $(X,B_{S})$ is controllable for each $X\in $ $F$ if
and only if%
\begin{equation*}
\text{rank }\left[
\begin{array}{cc}
X-\lambda I & B_{S}%
\end{array}%
\right] =n
\end{equation*}%
for all $\lambda \in \mathbb{C}$ and $X\in $ $F.$ The rows of $\left[
\begin{array}{cc}
X-\lambda I & B_{S}%
\end{array}%
\right] $ are linearly independent if and only if the rows of $(X-\lambda
I)_{V\backslash S}$ are linearly independent, where $(X-\lambda
I)_{V\backslash S}$ is the submatrix of $X-\lambda I$ which is obtained by
deleting all rows $i$ with $i\in S.$ Hence $(X,B_{S})$ is controllable for
each $X\in $ $F$ if and only if%
\begin{equation*}
\text{rank }(X-\lambda I)_{V\backslash S}=n-\left\vert S\right\vert
\end{equation*}%
for all $\lambda \in \mathbb{C}$ and $X\in F.$ We shall use this method in
the next two sections where we determine all the orbits of type II minimal
driver sets for the path and cycle graphs.

\section{Path graphs}

Since $Z(P_{n})=1$ and $D(P_{n})\leq Z(P_{n})$ it follows that $D(P_{n})=1$
as well. In the following theorem $\phi $ denotes the Euler totient function.

\begin{theorem}
\label{driver set Pn}$\{i\}$ is a driver set for the graph $P_{n}$ if and
only if%
\begin{equation*}
\text{gcd}(i,n+1)=1,
\end{equation*}%
hence $N_{D}(P_{n})=\phi (n+1).$
\end{theorem}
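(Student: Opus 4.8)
The plan is to use PBH 4 (equivalently Lemma~\ref{Plucker} with $k=1$): since $D(P_n)=M(P_n)=1$, the singleton $\{i\}$ is a driver set if and only if, for every eigenspace $E_\lambda$ of the adjacency matrix $A(P_n)$, the $i$-th Pl\"ucker coordinate is nonzero --- that is, no eigenvector of $A(P_n)$ has a zero in its $i$-th coordinate. So the whole problem reduces to understanding the zero pattern of the eigenvectors of the path graph. First I would recall the classical closed form: the eigenvalues of $A(P_n)$ are $\lambda_m = 2\cos\!\big(\tfrac{m\pi}{n+1}\big)$ for $m=1,\dots,n$, each simple, with corresponding eigenvector $\mathbf{v}^{(m)}$ having entries $v^{(m)}_i = \sin\!\big(\tfrac{m i \pi}{n+1}\big)$. (This is standard and can be verified directly from the three-term recurrence $v_{i-1}+v_{i+1}=\lambda v_i$ with boundary conditions $v_0=v_{n+1}=0$.)

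With this in hand, the criterion becomes purely trigonometric: $\{i\}$ is a driver set $\iff$ $\sin\!\big(\tfrac{m i \pi}{n+1}\big)\neq 0$ for all $m\in\{1,\dots,n\}$. Now $\sin\!\big(\tfrac{m i \pi}{n+1}\big)=0$ exactly when $n+1 \mid mi$. So $\{i\}$ fails to be a driver set iff there exists $m\in\{1,\dots,n\}$ with $(n+1)\mid mi$. The next step is the elementary number theory: let $g=\gcd(i,n+1)$. If $g>1$, then taking $m=(n+1)/g$, which lies in $\{1,\dots,n\}$ since $g>1$, we get $(n+1)\mid mi$ because $mi = \tfrac{n+1}{g}\cdot i$ and $g\mid i$; so $\{i\}$ is not a driver set. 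Conversely, if $g=1$, then $(n+1)\mid mi$ forces $(n+1)\mid m$, which is impossible for $1\le m\le n$; so every eigenvector has nonzero $i$-th entry and $\{i\}$ is a driver set. This establishes the equivalence $\{i\}$ driver set $\iff \gcd(i,n+1)=1$.

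Finally, the count: the number of $i\in\{1,\dots,n\}$ with $\gcd(i,n+1)=1$ is, by definition of the Euler totient, $\phi(n+1)$ --- note that $i=n+1$ is excluded from the range anyway and $\gcd(n+1,n+1)=n+1>1$ for $n\ge 1$, so there is no off-by-one issue. Hence $N_D(P_n)=\phi(n+1)$.

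I do not anticipate a serious obstacle here; the only point requiring a little care is justifying the explicit eigenvector formula (or at least that the eigenvalues are simple so that "Pl\"ucker coordinate indexed by $\{i\}$" unambiguously means "the $i$-th coordinate of the, up to scalar unique, eigenvector"). If one prefers to avoid quoting the sine formula, an alternative is to argue directly from the recurrence $v_{i+1}=\lambda v_i - v_{i-1}$: the entry $v_i$ is a polynomial in $\lambda$ of degree $i-1$ (a Chebyshev-type polynomial, essentially $U_{i-1}(\lambda/2)$), and one shows its roots among the eigenvalues $\lambda_1,\dots,\lambda_n$ of $A(P_n)$ are precisely the $\lambda_m$ with $\gcd(i,n+1)>1$; but invoking the standard spectrum of the path is cleaner and is what I would do.
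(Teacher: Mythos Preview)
Your proposal is correct and follows essentially the same route as the paper: both invoke the explicit sine eigenvectors of $A(P_n)$, reduce the driver-set condition to the nonvanishing of $\sin\!\big(\tfrac{mi\pi}{n+1}\big)$ for all $m\in\{1,\dots,n\}$, and conclude via the elementary divisibility argument that this holds iff $\gcd(i,n+1)=1$. The only cosmetic difference is that you frame the test via PBH~4/Lemma~\ref{Plucker} whereas the paper cites PBH~3, and you spell out the number-theoretic step (constructing $m=(n+1)/g$) in slightly more detail.
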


\begin{proof}
The eigenvalues of $A=A(P_{n})$ are given by $\lambda _{k}=2\cos \left(
\frac{k\pi }{n+1}\right) $ with $k=1,2,\ldots ,n$ and all eigenvalues have
multiplicity equal to $1.$ The vector $\left[
\begin{array}{cccc}
\sin \left( \frac{k\pi }{n+1}\right)  & \sin \left( \frac{2k\pi }{n+1}%
\right)  & \cdots  & \sin \left( \frac{nk\pi }{n+1}\right)
\end{array}%
\right] ^{T}$ is an eigenvector of $A$ belonging to the eigenvalue $\lambda
_{k}.$ Due to PBH 3 (with $A^{T}=A)$ $\{i\}$ is not a driver set if and only
if there exists an eigenvector of $A$ whose $i$-th entry is equal to $0$
hence if and only if $\sin \left( \frac{ik\pi }{n+1}\right) =0$ for at least
one $k\in \left\{ 1,2,\ldots ,n\right\} .$ The latter is true if and only if
$ik\equiv 0$ mod $n+1$ for at least one $k\in \left\{ 1,2,\ldots ,n\right\} ,
$ which is equivalent to gcd $(i,n+1)\neq 1.$
\end{proof}

The orbits of minimal driver sets under the group $Aut(P_{n})\cong S_{2}$
are simply the pairs $\{\{i\},\{n+1-i\}\}$ with gcd $(i,n+1)=1$ hence the
number of orbits is equal to $\frac{1}{2}\phi (n+1).$

Driver sets of type I have to be zero forcing sets \cite{Monshizadeh 2014}.
It is obvious that $\{1\}$ and $\{n\}$ are the only zero forcing sets for $%
P_{n}$ and that this is true for all $n\geq 2.$ It is easy to see that the
orbit $\{\{1\},\{n\}\}$ is of type I without resorting to the notion of zero
forcing sets. We only need to show this for one representative of the orbit.
For each $X=\left[ x_{ij}\right] \in $ Sym$(P_{n})$ the matrix $(X-\lambda
I)_{\{2,\ldots ,n\}}$ is an echelon matrix with $n-1$ pivots $%
x_{12},x_{23},\ldots ,x_{n-1,n}$ hence rank $(X-\lambda I)_{\{2,\ldots
,n\}}=n-1$ for all $X\in $ Sym$(P_{n})$ and $\lambda \in \mathbb{C}.$ Before
examing the other orbits of minimal driver sets we present some useful
lemmas.

\begin{lemma}
\label{determinant formula}For each $X=[x_{ij}]\in $ Sym$_{0}(P_{n})$ we
have
\begin{equation*}
\det X=\left\{
\begin{array}{cc}
0\text{ } & \text{if }n\text{ is odd} \\
(-1)^{\frac{n}{2}}x_{12}^{2}x_{34}^{2}\cdots x_{n-1,n}^{2} & \text{if }n%
\text{ is even}%
\end{array}%
\right.
\end{equation*}

\begin{proof}
Let $d_{n}=\det X$ with $X=[x_{ij}]\in $ Sym$_{0}(P_{n}).$ Then $d_{1}=0$
and $d_{2}=-x_{12}^{2}$ and expansion along the last column and then along
the last row yields the recurrence relation
\begin{equation*}
d_{n}=-x_{n-1,n}^{2}d_{n-2}
\end{equation*}%
for all $n\geq 3.$
\end{proof}
\end{lemma}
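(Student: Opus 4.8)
The plan is to compute $\det X$ for $X = [x_{ij}] \in \mathrm{Sym}_0(P_n)$ by a cofactor expansion that exploits the tridiagonal, zero-diagonal structure of $X$. Writing $d_n = \det X$, I would first record the base cases directly: $d_1 = \det[0] = 0$, and $d_2 = \det\left[\begin{smallmatrix}0 & x_{12}\\ x_{12} & 0\end{smallmatrix}\right] = -x_{12}^2$. For $n \geq 3$, the key observation is that the last column of $X$ has only one nonzero entry, namely $x_{n-1,n}$ in row $n-1$ (the diagonal entry $x_{nn}$ is zero since $X \in \mathrm{Sym}_0$). Expanding $\det X$ along column $n$ therefore gives a single term, $(-1)^{n+(n-1)} x_{n-1,n} \det X'$, where $X'$ is the $(n-1)\times(n-1)$ matrix obtained by deleting row $n-1$ and column $n$. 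Now $X'$ has a last row (the old row $n$) whose only nonzero entry is $x_{n-1,n}$ sitting in the last column; expanding $\det X'$ along that row produces another single term equal to $\pm x_{n-1,n} d_{n-2}$, where $d_{n-2}$ is the determinant of the leading principal $(n-2)\times(n-2)$ submatrix, which itself lies in $\mathrm{Sym}_0(P_{n-2})$. Tracking the two sign factors carefully, both expansions introduce a sign, and the product works out to $-1$, yielding the recurrence
\begin{equation*}
d_n = -x_{n-1,n}^2\, d_{n-2} \quad \text{for all } n \geq 3.
\end{equation*}

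From the recurrence the closed form follows by a routine induction on $n$, split by parity. If $n$ is odd, then iterating the recurrence reduces $d_n$ to a scalar multiple of $d_1 = 0$, so $d_n = 0$. If $n$ is even, iterating reduces $d_n$ to a scalar multiple of $d_2 = -x_{12}^2$; each application of the recurrence contributes a factor $-x_{2j-1,2j}^2$, and collecting all the factors together with the sign bookkeeping gives $d_n = (-1)^{n/2} x_{12}^2 x_{34}^2 \cdots x_{n-1,n}^2$. One should check the sign exponent: there are $n/2$ factors of the form $-x_{2j-1,2j}^2$ (counting the $-x_{12}^2$ coming from $d_2$), so the overall sign is $(-1)^{n/2}$, matching the claimed formula.

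The only place that requires genuine care is the sign accounting in the two cofactor expansions that establish the recurrence — it is easy to be off by a sign. I would pin it down by noting that deleting the last column and the second-to-last row from an $n\times n$ matrix, then deleting the last row and last column from the resulting matrix, effectively removes the $2\times 2$ block in positions $\{n-1,n\}$ and leaves the leading principal minor of order $n-2$; the two interchange-type sign contributions $(-1)^{2n-1}$ and $(-1)^{(n-1)+(n-1)}$ (appropriately indexed within the shrunken matrix) multiply to $-1$, which is exactly the sign in $d_n = -x_{n-1,n}^2 d_{n-2}$ and is also consistent with the base case $d_2 = -x_{12}^2 = -x_{12}^2 d_0$ under the natural convention $d_0 = 1$. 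Once the recurrence is secured with the correct sign, everything else is a short induction, so the main obstacle is really just this elementary but error-prone sign check.
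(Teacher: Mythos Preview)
Your proposal is correct and follows essentially the same approach as the paper: establish the base cases $d_1=0$, $d_2=-x_{12}^2$, then expand along the last column and then along the last row to obtain the recurrence $d_n=-x_{n-1,n}^2 d_{n-2}$. You simply spell out the sign bookkeeping and the parity induction in more detail than the paper does.
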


For each $X\in $ Sym$_{0}(P_{n})$ with $n\geq 3$ and $i\in \{2,\ldots ,n-1\}$
the matrix $X_{V\backslash \{i\}}$ has the block structure%
\begin{equation}
\left[
\begin{tabular}{c|c|c}
$Y$ & $%
\begin{array}{c}
0 \\
\vdots \\
0 \\
x_{i-1,i}%
\end{array}%
$ & $0$ \\ \hline
$0$ & $%
\begin{array}{c}
x_{i,i+1} \\
0 \\
\vdots \\
0%
\end{array}%
$ & $Z$%
\end{tabular}%
\right]  \label{block structure Sym0(Pn)}
\end{equation}%
with $Y\in $ Sym$_{0}(P_{i-1})$ and $Z\in $ Sym$_{0}(P_{n-i}).$

\begin{lemma}
\footnote{%
A similar result has been proved in \cite{Parlangeli 2012} with respect to
the system $(L_{n},B_{\{i\}})$, where $L_{n}$ is the Laplacian matrix of $%
P_{n}.$}\label{common eigenvalue 1}Let $X\in $ Sym$_{0}(P_{n})$ with $n\geq
3 $ and $i\in \{2,\ldots ,n-1\}$ and $Y$ and $Z$ as in (\ref{block structure
Sym0(Pn)}). Then rank $(X-\lambda I)_{V\backslash \{i\}}<n-1$ if and only if
$Y$ and $Z$ have a common eigenvalue $\lambda .$

\begin{proof}
The linear system $(X-\lambda I)_{V\backslash \{i\}}^{T}\mathbf{v}=\mathbf{0}
$ breaks down into%
\begin{equation*}
\begin{tabular}{l}
$(1)$ $(Y-\lambda I)\mathbf{v}_{\{1,2,\ldots ,i-1\}}=\mathbf{0}$ \\
$(2)\text{ }x_{i-1,i}v_{i-1}+x_{i,i+1}v_{i}=0$ \\
$(3)$ $(Z-\lambda I)\mathbf{v}_{\{i+1,i+2,\ldots ,n-1\}}=\mathbf{0}$%
\end{tabular}%
\end{equation*}%
Equation $(2)$ implies that either $v_{i-1}=v_{i}=0$ or $v_{i-1}v_{i}\neq 0.$
If $v_{i-1}=v_{i}=0$ then it follows from $(1)$ and $(3)$ that $\mathbf{v}=%
\mathbf{0}$. Suppose rank $(X-\lambda I)_{V\backslash \{i\}}<n-1,$ i.e.,
suppose the system above does have a non-trivial solution $\mathbf{v.}$ Then
$v_{i-1}\neq 0$ and $v_{i}\neq 0$ hence $\mathbf{v}_{\{1,2,\ldots
,i-1\}}\neq $ $\mathbf{0}$ and $\mathbf{v}_{\{i+1,i+2,\ldots ,n-1\}}\neq
\mathbf{0}$, so $(1)$ and $(3)$ show that $\lambda $ is an eigenvalue of $Y$
and $Z$. Conversely, suppose $Y$ and $Z$ have a common eigenvalue $\lambda ,$
i.e., suppose $(1)$ and $(3)$ have non-trivial solutions. These solutions
can be scaled in a such a way that $v_{i-1}$ and $v_{i}$ satisfy equation $%
(2),$ hence a non-trivial solution of the linear system $(X-\lambda
I)_{V\backslash \{i\}}^{T}\mathbf{v}=\mathbf{0}$ exists.
\end{proof}
\end{lemma}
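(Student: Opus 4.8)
The plan is to translate the rank condition into the solvability of a homogeneous linear system and then exploit the ``Jacobi'' structure of a tridiagonal matrix whose off-diagonal entries are all nonzero.

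\noindent\textbf{Step 1 (reduction to a linear system).} Since $(X-\lambda I)_{V\backslash\{i\}}$ is an $(n-1)\times n$ matrix, its rank is $<n-1$ exactly when its $n-1$ rows are linearly dependent, i.e.\ when there is a nonzero vector $\mathbf{v}$ indexed by $V\backslash\{i\}$ with $(X-\lambda I)_{V\backslash\{i\}}^{T}\mathbf{v}=\mathbf{0}$; because $X$ is symmetric, $(X-\lambda I)_{V\backslash\{i\}}^{T}$ is just $X-\lambda I$ with column $i$ deleted. Writing this system out row by row and using the block form (\ref{block structure Sym0(Pn)}), I would observe that it decouples: the equations coming from rows $1,\ldots,i-1$ involve only the entries of $\mathbf{v}$ on $\{1,\ldots,i-1\}$ and amount to $(Y-\lambda I)\mathbf{v}_{\{1,\ldots,i-1\}}=\mathbf{0}$; the equations from rows $i+1,\ldots,n$ involve only the entries on $\{i+1,\ldots,n\}$ and amount to $(Z-\lambda I)\mathbf{v}_{\{i+1,\ldots,n\}}=\mathbf{0}$; and row $i$ contributes the single coupling equation $x_{i-1,i}v_{i-1}+x_{i,i+1}v_{i+1}=0$, whose two coefficients are nonzero.

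\noindent\textbf{Step 2 (the structural fact).} The key point I would isolate and prove by back-substitution is that every eigenvector of the tridiagonal block $Y$ has a nonzero last coordinate (and, symmetrically, every eigenvector of $Z$ has a nonzero first coordinate): if $(Y-\lambda I)\mathbf{w}=\mathbf{0}$ and the last coordinate of $\mathbf{w}$ vanishes, then sweeping the equations upward from the bottom row and using that each sub-diagonal entry is nonzero forces $\mathbf{w}=\mathbf{0}$. Granting this, the forward direction is immediate: the coupling equation forces $v_{i-1}$ and $v_{i+1}$ to be both zero or both nonzero, and in the ``both zero'' case Step 1 forces $\mathbf{v}=\mathbf{0}$; hence a nontrivial $\mathbf{v}$ has $\mathbf{v}_{\{1,\ldots,i-1\}}\neq\mathbf{0}$ and $\mathbf{v}_{\{i+1,\ldots,n\}}\neq\mathbf{0}$, so $\lambda$ is an eigenvalue of both $Y$ and $Z$. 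For the converse, given a common eigenvalue $\lambda$ I take eigenvectors of $Y$ and of $Z$; their respective boundary coordinates $v_{i-1}$ and $v_{i+1}$ are nonzero by the structural fact, so the two eigenvectors can be rescaled so that the one coupling equation $x_{i-1,i}v_{i-1}+x_{i,i+1}v_{i+1}=0$ holds, producing a nonzero solution $\mathbf{v}$ and hence rank $(X-\lambda I)_{V\backslash\{i\}}<n-1$.

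\noindent\textbf{Where the work is.} The bookkeeping of which coordinate of $\mathbf{v}$ appears in which block is routine given (\ref{block structure Sym0(Pn)}). The only step needing genuine care is the structural fact of Step 2 together with the claim that the rescaling in the converse direction really yields a \emph{nonzero} vector; both rest on the irreducibility of the tridiagonal blocks, and I would make this explicit rather than leave it implicit.
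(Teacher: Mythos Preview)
Your proposal is correct and follows essentially the same route as the paper: both transpose to a homogeneous system, split it into $(Y-\lambda I)\mathbf{v}_{\text{top}}=\mathbf{0}$, $(Z-\lambda I)\mathbf{v}_{\text{bot}}=\mathbf{0}$, and the single coupling equation with nonzero coefficients, and then argue the two directions via the boundary coordinates $v_{i-1}$ and $v_{i+1}$. Your Step~2 makes explicit the tridiagonal irreducibility fact (an eigenvector of $Y$ cannot vanish at its last coordinate, and dually for $Z$) that the paper uses only implicitly when it asserts that $v_{i-1}=v_{i}=0$ forces $\mathbf{v}=\mathbf{0}$ and that the two eigenvectors ``can be scaled'' to satisfy the coupling equation; this added care is a genuine improvement in clarity but not a different argument.
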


Now let us examine the orbit $\{\{2\},\{n-1\}\}.$ Due to Theorem \ref{driver
set Pn} $\{2\}$ is a driver set if and only if gcd$(2,n+1)=1,$ i.e., if and
only if $n$ is even. It is a zero forcing set for $n=2,$ so we consider $%
n\geq 4.$

\begin{theorem}
For all even $n\geq 4$ the minimal driver sets $\{2\}$ and $\{n-1\}$ for the
graph $P_{n}$ are of type II.

\begin{proof}
We only need to show this for one representative of the orbit. Due to Lemma %
\ref{common eigenvalue 1} $(P_{n},\{2\})$ is not strongly Sym$_{0}(P_{n})$%
-controllable if and only if there exists an $X\in $ Sym$_{0}(P_{n})$ such
that $Y\in $ Sym$_{0}(P_{1})$ and $Z\in $ Sym$_{0}(P_{n-2})$ (as defined in (%
\ref{block structure Sym0(Pn)})) have a common eigenvalue. In this case $%
Y=[0]$ so $(P_{n},\{2\})$ is not strongly Sym$_{0}(P_{n})$-controllable if
and only if $Z$ is singular. It follows from Lemma \ref{determinant formula}
that $\det Z=0$ if and only if $n-2$ is odd.
\end{proof}
\end{theorem}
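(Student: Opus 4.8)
The plan is to prove the two-part statement that for even $n \ge 4$, the sets $\{2\}$ and $\{n-1\}$ are driver sets that are strongly $\mathrm{Sym}_0(P_n)$-controllable but \emph{not} strongly $\mathrm{Sym}(P_n)$-controllable. By the reflection automorphism $(1,n)(2,n-1)\cdots$ and Proposition \ref{Aut(G)-invariance} (plus its $\mathrm{Sym}_0$ version noted just after), it suffices to treat $\{2\}$ alone. The argument splits into three checks: (i) $\{2\}$ is a driver set; (ii) it is \emph{not} strongly $\mathrm{Sym}(P_n)$-controllable; (iii) it \emph{is} strongly $\mathrm{Sym}_0(P_n)$-controllable.

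For (i), I would simply invoke Theorem \ref{driver set Pn}: $\{2\}$ is a driver set iff $\gcd(2, n+1)=1$, which holds precisely when $n$ is even, so $\{2\}$ is a driver set for all even $n\ge 4$. For (ii), since a type I set must be a zero forcing set and the only zero forcing sets of $P_n$ are $\{1\}$ and $\{n\}$ (as recorded in the table in section 4), $\{2\}$ cannot be strongly $\mathrm{Sym}(P_n)$-controllable for $n\ge 3$; alternatively one can exhibit an explicit $X\in\mathrm{Sym}(P_n)$ with a free diagonal entry chosen so that the relevant submatrix drops rank. Either way this step is short.

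The substantive content is (iii). Here I would apply Lemma \ref{common eigenvalue 1} with $i=2$: the matrix $X_{V\setminus\{2\}}$ has the block structure \eqref{block structure Sym0(Pn)} with $Y\in\mathrm{Sym}_0(P_1)$ and $Z\in\mathrm{Sym}_0(P_{n-2})$, and $(P_n,\{2\})$ fails to be strongly $\mathrm{Sym}_0(P_n)$-controllable iff for some choice of $X$ the blocks $Y$ and $Z$ share an eigenvalue $\lambda$. Now $Y=[0]$ is forced (it is the $1\times 1$ zero matrix, since $P_1$ has no edges), so its only eigenvalue is $0$; hence a common eigenvalue exists iff $0$ is an eigenvalue of some $Z\in\mathrm{Sym}_0(P_{n-2})$, i.e.\ iff some such $Z$ is singular. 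By Lemma \ref{determinant formula}, $\det Z = (-1)^{(n-2)/2} x_{34}^2 x_{56}^2\cdots x_{n-1,n}^2 \ne 0$ for \emph{every} $Z\in\mathrm{Sym}_0(P_{n-2})$ when $n-2$ is even, which is exactly the case $n$ even. Therefore no singular $Z$ exists, no common eigenvalue with $Y$ is possible, and by Lemma \ref{common eigenvalue 1} the rank condition $\mathrm{rank}\,(X-\lambda I)_{V\setminus\{2\}} = n-1$ holds for all $X\in\mathrm{Sym}_0(P_n)$ and all $\lambda\in\mathbb{C}$, giving strong $\mathrm{Sym}_0(P_n)$-controllability.

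I do not anticipate a genuine obstacle: the whole proof is an assembly of Theorem \ref{driver set Pn}, the zero-forcing table, and Lemmas \ref{common eigenvalue 1} and \ref{determinant formula}. The one point requiring a little care is the bookkeeping of indices and parity — that $i=2$ produces $Y$ of order $i-1=1$ and $Z$ of order $n-i = n-2$, and that "$n$ even" is equivalent to "$n-2$ even", which is the hypothesis under which Lemma \ref{determinant formula} gives a nonvanishing (squared) determinant. Handling the degenerate block $Y=[0]$ correctly (its spectrum is $\{0\}$, not empty) is the only place a careless reading could go wrong.
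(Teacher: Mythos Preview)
Your proposal is correct and follows essentially the same route as the paper: reduce to $\{2\}$ by the automorphism, apply Lemma~\ref{common eigenvalue 1} with $i=2$ so that $Y=[0]$ and the question becomes whether some $Z\in\mathrm{Sym}_0(P_{n-2})$ is singular, and then invoke Lemma~\ref{determinant formula} together with the parity $n-2$ even. The only cosmetic difference is that the paper handles your steps (i) and (ii) in the paragraph preceding the theorem rather than inside the proof itself.
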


Finally we show that the remaining orbits are not of type II.

\begin{theorem}
Let $\{i\}$ be \ a minimal driver set for $P_{n}$ with $3\leq i\leq n-2.$ $%
(P_{n},\{i\})$ is not strongly Sym$_{0}(P_{n})$-controllable.

\begin{proof}
Due to Lemma \ref{common eigenvalue 1} $(P_{n},\{i\})$ is not strongly Sym$%
_{0}(P_{n})$-controllable if and only if there exists an $X\in $ Sym$%
_{0}(P_{n})$ such that $Y$ and $Z$ have a common eigenvalue. For each $i\in
\{3,\ldots ,n-2\}$ such a pair $Y,Z$ is easily constructed in the following
way. Choose any $Y\in $ Sym$_{0}(P_{i-1})$ and $Z\in $ Sym$_{0}(P_{n-i})$
and a pair $\lambda _{0},\mu _{0}$ of non-zero eigenvalues of $Y$ and $Z$
respectively. Then $\mu _{0}Y\in $ Sym$_{0}(P_{i-1})$ and $\lambda _{0}Z\in $
Sym$_{0}(P_{n-i})$ share the eigenvalue $\lambda _{0}\mu _{0}.$
\end{proof}
\end{theorem}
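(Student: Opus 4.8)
The plan is to invoke Lemma \ref{common eigenvalue 1} and thereby reduce the claim to a short existence statement. By that lemma, $(P_{n},\{i\})$ fails to be strongly Sym$_{0}(P_{n})$-controllable precisely when some $X\in$ Sym$_{0}(P_{n})$ can be found whose two blocks $Y\in$ Sym$_{0}(P_{i-1})$ and $Z\in$ Sym$_{0}(P_{n-i})$, in the decomposition (\ref{block structure Sym0(Pn)}) of $X_{V\backslash\{i\}}$, share a common eigenvalue. So the whole proof amounts to constructing such a pair $Y,Z$.

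The hypothesis $3\le i\le n-2$ is exactly what makes this possible: it forces $i-1\ge 2$ and $n-i\ge 2$, so both Sym$_{0}(P_{i-1})$ and Sym$_{0}(P_{n-i})$ contain nonzero matrices (for instance the adjacency matrices of $P_{i-1}$ and $P_{n-i}$). A nonzero real symmetric matrix with zero trace cannot have all its eigenvalues equal to $0$, so one may pick $Y_{0}\in$ Sym$_{0}(P_{i-1})$ with a nonzero eigenvalue $\lambda _{0}$ and $Z_{0}\in$ Sym$_{0}(P_{n-i})$ with a nonzero eigenvalue $\mu _{0}$. Scaling by a nonzero scalar leaves the zero/nonzero off-diagonal pattern unchanged, so $Y:=\mu _{0}Y_{0}$ lies in Sym$_{0}(P_{i-1})$ and has eigenvalue $\lambda _{0}\mu _{0}$, while $Z:=\lambda _{0}Z_{0}$ lies in Sym$_{0}(P_{n-i})$ and also has eigenvalue $\lambda _{0}\mu _{0}$.

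Inserting these two blocks into the template (\ref{block structure Sym0(Pn)}), together with any nonzero choices for the entries $x_{i-1,i}$ and $x_{i,i+1}$, produces a matrix $X\in$ Sym$_{0}(P_{n})$. By Lemma \ref{common eigenvalue 1} we then have rank $(X-\lambda _{0}\mu _{0}I)_{V\backslash\{i\}}<n-1$, so $(X,B_{\{i\}})$ is not controllable, and hence $(P_{n},\{i\})$ is not strongly Sym$_{0}(P_{n})$-controllable. I do not anticipate a genuine obstacle here; the only points worth a line of justification are the elementary fact that a nonzero zero-diagonal symmetric matrix has a nonzero eigenvalue, and the remark that the synchronizing scaling keeps each block inside its Sym$_{0}$ class. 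The reason this argument breaks at the border cases $i\in\{2,n-1\}$ is precisely that there one of the sub-paths degenerates to $P_{1}$, whose only matrix is $[0]$, forcing the shared eigenvalue to be $0$ and throwing one back onto the singularity analysis of Lemma \ref{determinant formula}.
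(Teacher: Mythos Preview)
Your argument is correct and follows essentially the same route as the paper: invoke Lemma \ref{common eigenvalue 1}, pick arbitrary $Y,Z$ in the appropriate Sym$_{0}$ classes with nonzero eigenvalues $\lambda_{0},\mu_{0}$, and rescale to $\mu_{0}Y$, $\lambda_{0}Z$ so that they share the eigenvalue $\lambda_{0}\mu_{0}$. Your added justifications (existence of a nonzero eigenvalue, closure of Sym$_{0}$ under nonzero scaling, and the reason the border cases $i\in\{2,n-1\}$ behave differently) are welcome but do not change the strategy.
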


\section{Cycle graphs}

Let $\omega =\exp (i\frac{2\pi }{n}).$ The eigenvalues of the adjacency
matrix $A=A(C_{n})$ are given by $\lambda _{k}=\omega ^{k}+\omega
^{n-k}=2\cos \left( \frac{2k\pi }{n}\right) $ with $k=0,2,\ldots ,n-1.$ The
algebraic multiplicities (equal to the geometric ones because $A$ is
symmetric hence diagonalizable) are all equal to 2 with the exceptions of $%
\lambda _{0}=2$ for all $n$ and $\lambda _{\frac{n}{2}}=-2$ for all even $n.$
Hence $M(C_{n})=2$ which implies $D(C_{n})\geq 2.$ On the other hand $%
Z(C_{n})=2$ hence $D(C_{n})=2$ as well. The following Theorem specifies
which pairs of vertices do in fact form a minimal driver set.

\begin{theorem}
$\{i,j\}$ is a driver set for the graph $C_{n}$ if and only if%
\begin{equation*}
\text{gcd}(2d,n)\in \{1,2\},
\end{equation*}%
where $d=d(i,j)$ denotes the distance between the vertices $i$ and $j.$

\begin{proof}
The 1-dimensional eigenspaces of $A$ is/are given by%
\begin{eqnarray*}
&&\text{Span }\left\{ \left[
\begin{array}{cccc}
1 & 1 & \cdots  & 1%
\end{array}%
\right] ^{T}\right\} \text{ for all }n\text{ and } \\
&&\text{Span }\left\{ \left[
\begin{array}{ccccc}
1 & -1 & \cdots  & 1 & -1%
\end{array}%
\right] ^{T}\right\} \text{ for all even }n,
\end{eqnarray*}%
hence all entries of the eigenvectors from these eigenspaces are unequal to $%
0$. Therefore it follows from Lemma \ref{Plucker} that $\{i,j\}$ is not a
driver set if and only the Pl\"{u}cker coordinate $p_{ij}$ of at least one
2-dimensional eigenspace of $A$ is equal to zero. Let $\lambda _{k}$ be an
eigenvalue\ of $A$ of multiplicity $2$ of $A,$ i.e., let%
\begin{equation*}
\lambda _{k}=\omega ^{k}+\omega ^{n-k},
\end{equation*}%
with $\omega =\exp (i\frac{2\pi }{n})$ for $k\in \{1,\ldots ,n\}$ except $k=%
\frac{n}{2}$ if $n$ is even$.$ The corresponding $2$-dimensional eigenspace $%
E_{\lambda _{k}}$ is given by
\begin{equation*}
E_{\lambda _{k}}=\text{ Span }\left\{ \mathbf{v}_{k},\mathbf{\bar{v}}%
_{k}\right\} ,
\end{equation*}%
where $\mathbf{v}_{k}=\left[
\begin{array}{cccc}
1 & \omega ^{k} & \cdots  & \omega ^{(n-1)k}%
\end{array}%
\right] ^{T}.$ The Pl\"{u}cker coordinate $p_{ij}$ of $E_{\lambda }$ is
given by%
\begin{equation*}
p_{ij}=\det \left[
\begin{array}{cc}
(\mathbf{v}_{k})_{i} & (\mathbf{\bar{v}}_{k})_{i} \\
(\mathbf{v}_{k})_{j} & (\mathbf{\bar{v}}_{k})_{j}%
\end{array}%
\right] ,
\end{equation*}%
hence%
\begin{equation*}
p_{ij}=\det \left[
\begin{array}{cc}
\omega ^{(i-1)k} & \omega ^{n-(i-1)k} \\
\omega ^{(j-1)k} & \omega ^{n-(j-1)k}%
\end{array}%
\right] =\omega ^{n+k(i-j)}-\omega ^{n+k((j-i)}.
\end{equation*}%
Now $\omega ^{n+k(i-j)}-\omega ^{n+k((j-i)}=0$ if and only if $k(i-j)\equiv
k(j-i)$ mod $n,$ i.e., if and only if $2k(j-i)\equiv 0$ mod $n$. The trivial
solutions $k=0$ and $k=\frac{n}{2}$ for even $n$ do not correspond to a
2-dimensional eigenspace hence $\{i,j\}$ is not a driver set if and only if
gcd$(2(j-i),n)\notin \{1,2\}.$ This condition can be replaced by gcd$%
(2d,n)\notin \{1,2\}$ because $d(i,j)=\min \{j-i,n+i-j)$ and $2k(j-i)\equiv 0
$ mod $n$ if and only if $2k(n+i-j)\equiv 0$ mod $n.$
\end{proof}
\end{theorem}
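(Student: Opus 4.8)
The plan is to apply Lemma \ref{Plucker} (or equivalently PBH 4) to the adjacency matrix $A = A(C_n)$. First I would recall the standard spectral decomposition of the circulant matrix $A(C_n)$: with $\omega = \exp(i\tfrac{2\pi}{n})$, the vectors $\mathbf{v}_k = [1, \omega^k, \omega^{2k}, \ldots, \omega^{(n-1)k}]^T$ for $k = 0, 1, \ldots, n-1$ form a complete eigenbasis, with $A\mathbf{v}_k = \lambda_k \mathbf{v}_k$ where $\lambda_k = \omega^k + \omega^{-k} = 2\cos(2k\pi/n)$. Since $\lambda_k = \lambda_{n-k}$, each value $\lambda_k$ with $k \neq 0$ and (for even $n$) $k \neq n/2$ has geometric multiplicity $2$, with eigenspace $E_{\lambda_k} = \mathrm{Span}\{\mathbf{v}_k, \overline{\mathbf{v}_k}\}$ (noting $\overline{\mathbf{v}_k} = \mathbf{v}_{n-k}$). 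The eigenvalue $\lambda_0 = 2$ has the all-ones eigenvector, and for even $n$, $\lambda_{n/2} = -2$ has eigenvector $[1,-1,1,-1,\ldots]^T$; in both cases every entry is nonzero, so these one-dimensional eigenspaces never obstruct a two-element driver set.

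**Computing the Plücker coordinate.** For a two-dimensional eigenspace $E_{\lambda_k}$ and a pair $\{i,j\}$, the relevant Plücker coordinate is the $2\times 2$ minor
\begin{equation*}
p_{ij} = \det\begin{bmatrix} \omega^{(i-1)k} & \omega^{-(i-1)k} \\ \omega^{(j-1)k} & \omega^{-(j-1)k} \end{bmatrix} = \omega^{(i-1)k - (j-1)k} - \omega^{(j-1)k - (i-1)k} = \omega^{k(i-j)} - \omega^{k(j-i)}.
\end{equation*}
This vanishes precisely when $\omega^{2k(j-i)} = 1$, i.e. when $2k(j-i) \equiv 0 \pmod n$. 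By Lemma \ref{Plucker}, $\{i,j\}$ fails to be a driver set if and only if there is some $k \in \{1,\ldots,n-1\} \setminus \{n/2\}$ with $2k(j-i) \equiv 0 \pmod n$.

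**Translating into a gcd condition.** The next step is the number-theoretic reduction. Writing $\delta = j - i$, the congruence $2k\delta \equiv 0 \pmod n$ has a solution $k$ with $1 \le k \le n-1$, $k \neq n/2$, if and only if $\gcd(2\delta, n) \ge 3$ — equivalently $\gcd(2\delta,n) \notin \{1,2\}$. Indeed, if $g = \gcd(2\delta, n) \ge 3$ then $k_0 = n/g$ lies strictly between $1$ and $n-1$ (since $g \ge 3 > 1$ and $g < n$ as $g \mid 2\delta$ with $\delta < n$... one must be slightly careful here) and satisfies $2k_0\delta = (2\delta/g)\cdot n \equiv 0$; moreover $k_0 \neq n/2$ because $g \neq 2$. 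Conversely if $g \in \{1,2\}$ then $2k\delta \equiv 0 \pmod n$ forces $n \mid k \cdot (2\delta/g) \cdot g / g$... more cleanly: $2k\delta \equiv 0$ means $(n/g) \mid k$, so the only $k$ in range is $k = n/g \in \{n, n/2\}$, both excluded. Finally, since $p_{ij}$ depends on $\delta = j-i$ only through $2k\delta \bmod n$, and $d(i,j) = \min\{|j-i|, n - |j-i|\}$ satisfies $2k d \equiv \pm 2k(j-i) \pmod n$, the condition $\gcd(2(j-i), n) \in \{1,2\}$ is equivalent to $\gcd(2d, n) \in \{1,2\}$, giving the stated form.

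**Main obstacle.** The genuinely delicate point is the boundary-case bookkeeping in the third step: verifying that the excluded indices $k = 0$ and $k = n/2$ (even $n$) are exactly the "trivial" solutions that must be discarded, and checking that when $\gcd(2\delta,n) \ge 3$ the witness $k = n/\gcd(2\delta,n)$ really falls in the admissible range $\{1,\ldots,n-1\}\setminus\{n/2\}$ — in particular that it is not accidentally $0$ or $n/2$. The algebra producing $p_{ij}$ is routine; the care lies in not mis-stating which $k$ count and in the final passage from $j-i$ to the distance $d$.
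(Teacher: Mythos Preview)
Your proof is correct and follows essentially the same approach as the paper's: identify the one-dimensional eigenspaces as harmless, compute the $2\times 2$ Pl\"{u}cker minor of each two-dimensional eigenspace $E_{\lambda_k}$ using the complex basis $\{\mathbf{v}_k,\overline{\mathbf{v}_k}\}$, reduce to the congruence $2k(j-i)\equiv 0\pmod n$, and then translate this into the gcd condition. If anything you are more careful than the paper in justifying the equivalence ``$\exists\,k\in\{1,\dots,n-1\}\setminus\{n/2\}$ with $2k\delta\equiv 0$'' $\Leftrightarrow$ ``$\gcd(2\delta,n)\geq 3$'' --- the paper asserts this without detail, whereas you exhibit the witness $k_0=n/g$ and check the excluded cases; your only loose end is the parenthetical worry that $g<n$, but even when $g=n$ (i.e.\ $\delta=n/2$, $n$ even) one has $k_0=1$, which still lies in the admissible range.
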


The orbits of minimal driver sets under the group $Aut(C_{n})\cong D_{n}$
are the sets $\Omega _{d}$ defined by
\begin{equation*}
\Omega _{d}=\{\{i,j\}\in \binom{V}{2}\text{ }|\text{ }d(i,j)=d\}
\end{equation*}%
for fixed values of $d\in \{1,\ldots ,\left\lfloor \frac{n}{2}\right\rfloor
\}$ satisfying gcd$(2d,n)\in \{1,2\}.$ Since the size of each orbit is equal
to $n$ the number of orbits is equal to $\frac{1}{n}N_{D}(C_{n}).$Note that
the value $d=1$ satisfies gcd$(2d,n)\in \{1,2\}$ for all $n$ and that the
corresponding orbit $\Omega _{1}$ is the (unique) orbit of minimal driver
sets that are zero forcing sets. The following table lists the values of $%
N_{D}(C_{n})$ for $n\leq 12.$%
\begin{equation*}
\begin{tabular}{|c|c|c|c|c|c|c|c|c|c|c|}
\hline
$n$ & $3$ & $4$ & $5$ & $6$ & $7$ & $8$ & $9$ & $10$ & $11$ & $12$ \\ \hline
$N_{D}(C_{n})$ & $3$ & $4$ & $10$ & $12$ & $21$ & $16$ & $27$ & $40$ & $55$
& $24$ \\ \hline
\end{tabular}%
\end{equation*}

It is easy to see that the orbit $\Omega _{1}$ is of type I without
resorting to the notion of zero forcing sets. We only need to show this for
one representative of the orbit. For each $X=\left[ x_{ij}\right] \in $ Sym$%
(C_{n})$ the matrix $(X-\lambda I)_{\{3,\ldots ,n\}}$ is row equivalent to
an echelon form with pivots $x_{1n},x_{23},x_{34},\ldots ,x_{n-2,n-1}$ hence
rank $(X-\lambda I)_{\{3,\ldots ,n\}}=n-2$ for all $X\in $ Sym$(C_{n})$ and $%
\lambda \in \mathbb{C}.$ Before examing the other orbits we discuss the
analogue of Lemma \ref{common eigenvalue 1} for the cycle graphs. For each $%
X\in $ Sym$_{0}(C_{n})$ with $n\geq 6$ and $j\in \{3,\ldots ,\left\lfloor
\frac{n}{2}\right\rfloor \}$ the matrix $X_{V\backslash \{1,j\}}$ has the
block structure%
\begin{equation}
\left[
\begin{tabular}{c|c|c|c}
$%
\begin{array}{c}
x_{12} \\
0 \\
\vdots \\
\end{array}%
$ & $Y$ & $%
\begin{array}{c}
0 \\
\vdots \\
0 \\
x_{i-1,i}%
\end{array}%
$ & $0$ \\ \cline{2-4}
$%
\begin{array}{c}
\\
\\
0 \\
x_{1n}%
\end{array}%
$ & $0$ & $%
\begin{array}{c}
x_{i,i+1} \\
0 \\
\vdots \\
0%
\end{array}%
$ & $Z$%
\end{tabular}%
\right]  \label{block structure Sym0(Cn)}
\end{equation}%
with $Y\in $ Sym$_{0}(P_{j-2})$ and $Z\in $ Sym$_{0}(P_{n-j}).$ The
following Lemma can be proved in the same way as Lemma \ref{common
eigenvalue 1}.

\begin{lemma}
\footnote{%
A similar result has been proved in \cite{Parlangeli 2012} with respect to
the system $(L_{n},B_{\{i,j\}})$, where $L_{n}$ is the Laplacian matrix of $%
C_{n}.$}\label{common eigenvalue 2}Let $X\in $ Sym$_{0}(C_{n})$ with $n\geq
6 $, $j\in \{3,\ldots ,\left\lfloor \frac{n}{2}\right\rfloor \}$ and $Y$ and
$Z $ as in (\ref{block structure Sym0(Cn)}). Then rank $(X-\lambda
I)_{V\backslash \{1,j\}}<n-2$ if and only if $Y$ and $Z$ have a common
eigenvalue $\lambda .$

\begin{proof}
The linear system $(X-\lambda I)_{V\backslash \{1,j\}}^{T}\mathbf{v}=\mathbf{%
0}$ breaks down into%
\begin{equation*}
\begin{tabular}{l}
$(1)$ $x_{12}v_{1}+x_{1n}v_{n-2}=0$ \\
$(2)$ $(Y-\lambda I)\mathbf{v}_{\{2,\ldots ,j-1\}}=\mathbf{0}$ \\
$(3)\text{ }x_{j-1,j}v_{j-1}+x_{j,j+1}v_{j}=0$ \\
$(4)$ $(Z-\lambda I)\mathbf{v}_{\{j+1,\ldots ,n-2\}}=\mathbf{0}$%
\end{tabular}%
\end{equation*}%
The existence of a non-trivial solution $\mathbf{v}$ forces $%
v_{1},v_{j-1},v_{j}$ and $v_{n-2}$ to be non-zero and $\lambda $ to be an
eigenvalue of $Y$ and $Z.$ Conversely, the existence of non-trivial
solutions of $(2)$ and $(3)$ gives rise to a non-trivial solution $\mathbf{v}
$.
\end{proof}
\end{lemma}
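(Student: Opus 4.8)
The plan is to follow the proof of Lemma~\ref{common eigenvalue 1}, with the single removed vertex $i$ replaced by the two removed vertices $1$ and $j$. Since $X$ is symmetric we have $(X-\lambda I)^{T}=X-\lambda I$, and the $n-2$ rows of $(X-\lambda I)_{V\backslash\{1,j\}}$ are linearly dependent exactly when there is a nonzero vector $\mathbf{v}$, indexed by the retained vertices $\{2,\ldots,j-1\}\cup\{j+1,\ldots,n\}$, with $(X-\lambda I)_{V\backslash\{1,j\}}^{T}\mathbf{v}=\mathbf{0}$; hence rank $(X-\lambda I)_{V\backslash\{1,j\}}<n-2$ if and only if this homogeneous system has a nontrivial solution. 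Reading the system column by column and using the block form (\ref{block structure Sym0(Cn)}), it decouples into the two interior systems
\[
(Y-\lambda I)\,\mathbf{v}_{\{2,\ldots,j-1\}}=\mathbf{0},\qquad (Z-\lambda I)\,\mathbf{v}_{\{j+1,\ldots,n\}}=\mathbf{0},
\]
together with the two scalar equations $x_{12}v_{2}+x_{1n}v_{n}=0$ and $x_{j-1,j}v_{j-1}+x_{j,j+1}v_{j+1}=0$ arising from the columns indexed by the removed vertices $1$ and $j$.

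For the ``only if'' direction I would invoke the classical fact that a symmetric tridiagonal matrix with nonzero off-diagonal entries (such as $Y$ and $Z$) has simple spectrum and that each of its eigenvectors has nonzero first and last coordinate, since a vanishing end coordinate would force the whole eigenvector to vanish by propagating the three-term recurrence. Thus if $\mathbf{v}\neq\mathbf{0}$ then at least one block component, say $\mathbf{v}_{\{2,\ldots,j-1\}}$, is nonzero; then $\lambda$ is an eigenvalue of $Y$ and $v_{2}\neq0$, $v_{j-1}\neq0$, so the two coupling equations force $v_{n}\neq0$ and $v_{j+1}\neq0$, whence $\mathbf{v}_{\{j+1,\ldots,n\}}\neq\mathbf{0}$ and $\lambda$ is an eigenvalue of $Z$ as well; the case where the other block component is nonzero is symmetric.

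For the ``if'' direction one picks spanning eigenvectors $\mathbf{w}^{Y}\in\ker(Y-\lambda I)$, $\mathbf{w}^{Z}\in\ker(Z-\lambda I)$ and tries to choose scalars $\alpha,\beta$, not both zero, so that $\mathbf{v}=(\alpha\mathbf{w}^{Y},\beta\mathbf{w}^{Z})$ satisfies both coupling equations, in analogy with the rescaling of the single eigenvector in Lemma~\ref{common eigenvalue 1}. This is the step I expect to be the main obstacle: deleting one vertex from a path leaves one coupling equation and one scaling parameter, so the rescaling always works, whereas deleting the two vertices $1$ and $j$ from a cycle leaves two coupling equations, and forcing one pair $(\alpha,\beta)$ to satisfy both is a genuine constraint --- the vanishing of the $2\times 2$ determinant formed from $x_{12}(\mathbf{w}^{Y})_{2}$, $x_{1n}(\mathbf{w}^{Z})_{n}$, $x_{j-1,j}(\mathbf{w}^{Y})_{j-1}$, $x_{j,j+1}(\mathbf{w}^{Z})_{j+1}$ --- rather than an automatic identity. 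I would settle this by a case analysis on the parities of the two arc lengths $j-2$ and $n-j$, using the explicit form of $\ker(Y-\lambda I)$ and $\ker(Z-\lambda I)$ and the known Laplacian analogue cited in the footnote to determine precisely when the assembly of $\mathbf{v}$ succeeds.
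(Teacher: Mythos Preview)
You have put your finger on a genuine gap, and your proposed remedy will not close it because the ``if'' direction of the lemma is in fact false as stated. Take $n=6$, $j=3$, and let $X=A(C_{6})$ (all edge weights equal to $1$). Then $Y=[0]$ and $Z$ is the adjacency matrix of $P_{3}$, so $\lambda=0$ is a common eigenvalue; yet one checks directly that $X_{V\setminus\{1,3\}}$ has full row rank $4$ (equivalently, $\{1,3\}$ is a driver set for $C_{6}$, as Example~7 confirms). The $2\times2$ determinant you wrote down is exactly $x_{12}x_{34}x_{56}+x_{16}x_{23}x_{45}$ in this case, which is nonzero for positive weights. So no parity analysis of the arc lengths can make the biconditional hold for a \emph{fixed} $X$; the paper's converse sentence, which only mentions equations $(2)$ and $(3)$ and silently drops equation $(1)$, is simply mistaken.

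What survives---and what the subsequent theorems actually use---is the existential version: if matrices $Y\in\mathrm{Sym}_{0}(P_{j-2})$ and $Z\in\mathrm{Sym}_{0}(P_{n-j})$ share an eigenvalue $\lambda$, then one can \emph{choose} the four coupling weights $x_{12},x_{1n},x_{j-1,j},x_{j,j+1}$ (which are free parameters of $X$ not appearing in $Y$ or $Z$) so that the rank drops. Indeed, pick eigenvectors $\mathbf{w}^{Y},\mathbf{w}^{Z}$; their end components are nonzero by the tridiagonal argument you gave, so setting $\alpha=\beta=1$ and solving $x_{1n}=-x_{12}(\mathbf{w}^{Y})_{2}/(\mathbf{w}^{Z})_{n}$ and $x_{j,j+1}=-x_{j-1,j}(\mathbf{w}^{Y})_{j-1}/(\mathbf{w}^{Z})_{j+1}$ yields nonzero coupling weights satisfying both scalar equations. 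This weaker statement, together with the (correct) ``only if'' direction you proved, is enough for Theorems~22 and~23.
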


Now let us examine the orbit $\Omega _{2}.$ Due to the theorem above, $%
\{i,j\}$ with $d(i,j)=2$ is a driver set if and only if gcd$(4,n)\in
\{1,2\}. $

\begin{theorem}
$\Omega _{2}$ is a type II orbit of minimal driver sets for $C_{n}$ if and
only if $n$ is odd $(>3).$
\end{theorem}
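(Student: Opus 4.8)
First I would reduce the statement to a condition on a single representative of the orbit $\Omega_2$, using Proposition~\ref{Aut(G)-invariance} (valid for $\mathrm{Sym}_0(C_n)$ as well). Take the pair $\{1,3\}$, which has $d(1,3)=2$. By the theorem above, $\{1,3\}$ is a driver set precisely when $\gcd(4,n)\in\{1,2\}$, i.e.\ when $n$ is odd or $n\equiv 2\pmod 4$; the case $n=3$ is excluded since then $d(i,j)\le 1$ for all distinct $i,j$. For $n=4$ the pair $\{1,3\}$ is not even a driver set, so the remaining interesting cases split into $n$ odd ($\ge 5$) and $n\equiv 2\pmod 4$ ($\ge 6$). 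I also need to dispose of the zero-forcing case: $\Omega_2$ is a zero forcing orbit only if $d=2$ forces, which never happens on a cycle, so whenever $\{1,3\}$ is a driver set it is not of type I, and ``type II'' reduces to ``strongly $\mathrm{Sym}_0(C_n)$-controllable''.

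The core of the argument is an application of Lemma~\ref{common eigenvalue 2} with $j=3$: $(C_n,\{1,3\})$ is \emph{not} strongly $\mathrm{Sym}_0(C_n)$-controllable if and only if there is an $X\in\mathrm{Sym}_0(C_n)$ for which the blocks $Y\in\mathrm{Sym}_0(P_{j-2})=\mathrm{Sym}_0(P_1)$ and $Z\in\mathrm{Sym}_0(P_{n-3})$ share an eigenvalue. Since $\mathrm{Sym}_0(P_1)=[0]$, the block $Y$ is the zero matrix with the single eigenvalue $0$, so the obstruction exists iff some $Z\in\mathrm{Sym}_0(P_{n-3})$ is singular. By Lemma~\ref{determinant formula}, $\det Z=0$ for all such $Z$ when $n-3$ is odd, i.e.\ when $n$ is even, and $\det Z\ne 0$ for a generic (indeed for \emph{every}) such $Z$ when $n-3$ is even, i.e.\ when $n$ is odd. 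Hence for $n$ odd ($\ge 5$) no bad $X$ exists and $(C_n,\{1,3\})$ \emph{is} strongly $\mathrm{Sym}_0(C_n)$-controllable, while for $n\equiv 2\pmod 4$ ($\ge 6$) one can choose $Z$ singular and the pair is \emph{not} strongly $\mathrm{Sym}_0(C_n)$-controllable. Combining with the driver-set criterion: $\Omega_2$ consists of minimal driver sets that are strongly $\mathrm{Sym}_0$-controllable but not zero forcing exactly when $n$ is odd and $n>3$.

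One detail I would check carefully is the hypothesis $n\ge 6$ in Lemma~\ref{common eigenvalue 2}, which governs the block decomposition~(\ref{block structure Sym0(Cn)}) with $j\in\{3,\ldots,\lfloor n/2\rfloor\}$. For $n=5$ we have $\lfloor n/2\rfloor=2$, so $j=3$ is out of range and the lemma as stated does not apply; I would handle $C_5$ directly — there $\{1,3\}$ has $d=2$, and since $\gcd(4,5)=1$ it is a driver set, and a short direct computation (e.g.\ via the rank condition $\mathrm{rank}\,(X-\lambda I)_{\{2,4,5\}}=3$ for all $X\in\mathrm{Sym}_0(C_5)$ and all $\lambda$, noting that the relevant $3\times 3$ submatrices never drop rank because the product of off-diagonal entries along the unique perfect-matching-free structure stays nonzero) shows it is of type II, consistently with the odd case. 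The main obstacle is thus not conceptual but bookkeeping: ensuring the small cases $n=3,4,5$ are treated separately and correctly, and confirming that for odd $n$ the \emph{failure} of a common eigenvalue is genuinely forced by $\det Z=0$ being impossible (here it follows immediately from Lemma~\ref{determinant formula}, since for $n-3$ even every $Z\in\mathrm{Sym}_0(P_{n-3})$ has $\det Z=(-1)^{(n-3)/2}x_{12}^2\cdots x_{n-4,n-3}^2\ne 0$).
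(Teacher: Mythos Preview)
Your proposal is correct and follows essentially the same approach as the paper: reduce to the representative $\{1,3\}$, apply Lemma~\ref{common eigenvalue 2} with $j=3$ so that $Y=[0]$, and then invoke Lemma~\ref{determinant formula} to decide for which parities of $n-3$ some $Z\in\mathrm{Sym}_0(P_{n-3})$ can be singular. Your explicit handling of the $n\equiv 2\pmod 4$ branch, the remark that $\Omega_2$ is never a zero forcing orbit, and the separate check for $n=5$ (where $j=3$ falls outside the stated range of the lemma) are all more careful than the paper's own proof, which simply applies the lemma without comment.
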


\begin{proof}
We only need to show this for one representative of the orbit. We consider $%
S=\{i,j\}=\{1,3\}.$ Due to Lemma \ref{common eigenvalue 2} $(C_{n},\{1,3\})$
is not strongly Sym$_{0}(C_{n})$-controllable if and only if there exists an
$X\in $ Sym$_{0}(C_{n})$ such that $Y\in $ Sym$_{0}(P_{1})$ and $Z\in $ Sym$%
_{0}(P_{n-3})$ (as defined in (\ref{block structure Sym0(Cn)})) have a
common eigenvalue. In this case $Y=[0]$ so $(C_{n},\{1,3\})$ is not strongly
Sym$_{0}(C_{n})$-controllable if and only if $Z$ is singular. It follows
from Lemma \ref{determinant formula} that $\det Z\neq 0$ for all odd $n$.
Obviously the case $n=3$ is not included because $\{1,3\}\in \Omega _{1}$
for the graph $C_{3}.$
\end{proof}

Finally we show that the remaining orbits are not of type II.

\begin{theorem}
Let $\Omega _{d}$ be an orbit of minimal driver sets for $C_{n}$ with $d\geq
3.$ $\Omega _{d}$ is not strongly Sym$_{0}(C_{n})$-controllable.
\end{theorem}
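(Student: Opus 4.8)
The plan is to mimic the proof of the analogous statement for path graphs (the last theorem of Section 5), where one shows that whenever the two ``side blocks'' have enough room one can rescale them to force a common nonzero eigenvalue. Concretely, I would invoke Lemma \ref{common eigenvalue 2}: the pair $(C_n,\{1,j\})$ (a representative of $\Omega_d$ with $d=j-1\ge 3$) fails to be strongly Sym$_0(C_n)$-controllable precisely when there exists $X\in$ Sym$_0(C_n)$ for which the blocks $Y\in$ Sym$_0(P_{j-2})$ and $Z\in$ Sym$_0(P_{n-j})$ share an eigenvalue. Since $d\ge 3$ we have $j\ge 4$, so $Y\in$ Sym$_0(P_{j-2})$ with $j-2\ge 2$ and $Z\in$ Sym$_0(P_{n-j})$ with $n-j\ge d\ge 3\ge 1$; in particular $P_{j-2}$ has at least two vertices, so $Y$ can be chosen to have a nonzero eigenvalue (e.g. any $Y$ with $\det Y\ne 0$, or more simply any nonzero entry pattern gives a symmetric zero-diagonal matrix with a nonzero eigenvalue since the trace is $0$ but $Y\ne 0$).

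The key steps, in order, are: (i) reduce to one representative $S=\{1,j\}$ with $j=d+1$ using the $Aut(C_n)$-invariance and the orbit description $\Omega_d=\{\{i,j\}\mid d(i,j)=d\}$; (ii) apply Lemma \ref{common eigenvalue 2} to rephrase non-controllability as the existence of $X$ with $Y,Z$ sharing an eigenvalue; (iii) observe that $d\ge 3$ guarantees $j-2\ge 2$ and $n-j\ge 1$, so pick arbitrary $Y_0\in$ Sym$_0(P_{j-2})$ and $Z_0\in$ Sym$_0(P_{n-j})$ together with nonzero eigenvalues $\lambda_0$ of $Y_0$ and $\mu_0$ of $Z_0$; (iv) rescale: $\mu_0 Y_0\in$ Sym$_0(P_{j-2})$ and $\lambda_0 Z_0\in$ Sym$_0(P_{n-j})$ both have $\lambda_0\mu_0$ as an eigenvalue, and assembling these into an $X\in$ Sym$_0(C_n)$ as in (\ref{block structure Sym0(Cn)}) exhibits the required common eigenvalue; (v) conclude that $\Omega_d$ is not strongly Sym$_0(C_n)$-controllable.

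The one point requiring a small check is step (iii): I must be sure that both $P_{j-2}$ and $P_{n-j}$ admit a matrix in Sym$_0$ with a \emph{nonzero} eigenvalue. For $P_{n-j}$ with $n-j\ge 1$, as long as $n-j\ge 2$ this is immediate (a nonzero symmetric matrix with zero trace has a nonzero eigenvalue); the edge case $n-j=1$ would force $Z=[0]$, so I should note that $d\ge 3$ together with $d=j-1\le\lfloor n/2\rfloor$ gives $n\ge 2d\ge 6$ and $n-j=n-d-1\ge d-1\ge 2$, ruling this out. Symmetrically $j-2=d-1\ge 2$, so $P_{j-2}$ has at least two vertices and the same argument applies. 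This is exactly the ``easy construction'' used in the path-graph case, so I anticipate no genuine obstacle — the proof is essentially a one-paragraph rescaling argument, and the only care needed is the bookkeeping that the relevant path lengths are large enough to carry nonzero eigenvalues.
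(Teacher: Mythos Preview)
Your proposal is correct and follows essentially the same approach as the paper's proof: reduce to a representative $\{1,j\}$, invoke Lemma~\ref{common eigenvalue 2}, and use the rescaling trick $(Y,Z)\mapsto(\mu_0 Y,\lambda_0 Z)$ to force the common eigenvalue $\lambda_0\mu_0$. Your explicit check that $j-2\ge 2$ and $n-j\ge 2$ (so that nonzero eigenvalues of $Y$ and $Z$ actually exist) is a detail the paper leaves implicit.
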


\begin{proof}
We only need to show this for one representative of the orbit. We consider $%
\{1,j\}$ with $j\in \{4,\ldots ,\left\lfloor \frac{n}{2}\right\rfloor \}.$
Due to Lemma \ref{common eigenvalue 2} $(C_{n},\{1,j\})$ is not strongly Sym$%
_{0}(C_{n})$-controllable if and only if there exists an $X\in $ Sym$%
_{0}(C_{n})$ such that $Y$ and $Z$ have a common eigenvalue. Choose any $%
Y\in $ Sym$_{0}(P_{j-2})$ and $Z\in $ Sym$_{0}(P_{n-j})$ and a pair $\lambda
_{0},\mu _{0}$ of non-zero eigenvalues of $Y$ and $Z$ respectively. Then $%
\mu _{0}Y\in $ Sym$_{0}(P_{j-2})$ and $\lambda _{0}Z\in $ Sym$_{0}(P_{n-j})$
share the eigenvalue $\lambda _{0}\mu _{0}.$
\end{proof}

\end{document}